\newcommand{\NZ}{\ensuremath{\setminus \{0\}}}
\definecolor{DarkBlue}{rgb}{0,0,0.4}
\definecolor{DarkRed}{rgb}{0.3,0,0}
\definecolor{DarkGreen}{rgb}{0,0.3,0}
\let\Ooooooldcite\cite
\renewcommand{\cite}[2][\empty]{\ifthenelse{\equal{#2}{?}}{{\color{red}\Ooooooldcite[find me!]{?}}}{\ifthenelse{\equal{#1}{\empty}}
{\Ooooooldcite{#2}}{\Ooooooldcite[#1]{#2}}}}
\newcommand{\IN}{\ensuremath{\mathbb{N}}}
\newcommand{\IR}{\ensuremath{\mathbb{R}}}
\DeclareMathOperator*{\argmin}{arg\,min}
\DeclareMathOperator*{\esssup}{ess\,sup}
\DeclareMathOperator*{\essinf}{ess\,inf}
\newcommand{\BIGOP}[1]{\mathop{\mathchoice{\raise-0.22em\hbox{\Huge $#1$}}{\raise-0.05em\hbox{\Large $#1$}}{\hbox{\large$#1$}}{#1}}}
\newcommand{\tensor}{\otimes}
\newcommand{\T}{\mathsf{T}}
\newcommand{\Tcal}{\ensuremath{\mathcal{T}}}
\newcommand{\Abf}{\ensuremath{\mathbf{A}}}
\newcommand{\Bbf}{\ensuremath{\mathbf{B}}}
\newcommand{\Cbf}{\ensuremath{\mathbf{C}}}
\newcommand{\Dbf}{\ensuremath{\mathbf{D}}}
\newcommand{\Ibf}{\ensuremath{\mathbf{I}}}
\newcommand{\Mbf}{\ensuremath{\mathbf{M}}}
\newcommand{\Nbf}{\ensuremath{\mathbf{N}}}
\newcommand{\Sbf}{\ensuremath{\mathbf{S}}}
\newcommand{\Tbf}{\ensuremath{\mathbf{T}}}
\newcommand{\Vbf}{\ensuremath{\mathbf{V}}}
\newcommand{\Xbf}{\ensuremath{\mathbf{X}}}
\newcommand{\bbf}{\ensuremath{\mathbf{b}}}
\newcommand{\dbf}{\ensuremath{\mathbf{d}}}
\newcommand{\ebf}{\ensuremath{\mathbf{e}}}
\newcommand{\sbf}{\ensuremath{\mathbf{s}}}
\newcommand{\ubf}{\ensuremath{\mathbf{u}}}
\newcommand{\vbf}{\ensuremath{\mathbf{v}}}
\newcommand{\wbf}{\ensuremath{\mathbf{w}}}
\newcommand{\zbf}{\ensuremath{\mathbf{z}}}
\newcommand{\zerobf}{\ensuremath{\mathbf{0}}}
\newcommand{\TEXT}[1]{\ensuremath{\quad\text{#1}\quad}}
\newcommand{\wrt}{with respect to }
\newcommand{\ie}{i.e.,\ }
\newcommand{\eg}{e.g.\ }
\newcommand{\cf}{cf.\ }
\newcommand{\Sec}{Section\xspace}
\newcommand{\Secs}{Sections\xspace}
\newcommand{\Fig}{Fig.\xspace}
\newcommand{\Thm}{Theorem\xspace}
\newcommand{\Lem}{Lemma\xspace}
\newcommand{\Pro}{Proposition\xspace}
\newcommand{\Pros}{Propositions\xspace}
\newcommand{\Cha}{Chapter\xspace}
\providecommand{\VERT}{\ensuremath{| \! | \! |}}
\newcommand{\Set}[1]{\left\{{#1}\right\}}
\newcommand{\set}[1]{\{{#1}\}}
\newcommand{\norm}[2]{\|{#1}\|_{{#2}}}
\newcommand{\tnorm}[2]{\VERT{#1}\VERT_{{#2}}}
\newcommand{\scalar}[2]{({#1})_{ {#2} }}
\newcommand{\DIV}{\mathop{\mathrm{div}}}
\newcommand{\GRAD}{\mathop{\mathrm{grad}}}
\newcommand{\Dt}{\partial_t}
\newcommand{\SubAndSup}[3]{%
	\ifthenelse{\equal{#3}{}}{%
		\ifthenelse{\equal{#2}{}}{%
			{#1}%
		}{%
			{#1}_{#2}%
		}
	}{%
		\ifthenelse{\equal{#2}{}}{%
			{#1}\ifthenelse{\equal{#3}{'}}{'}{^{#3}}%
		}{%
			{#1}_{#2}\ifthenelse{\equal{#3}{'}}{'}{^{#3}}%
		}%
	}%
}
\journalname{Numerical Algorithms}
\title{Space-time discretization of the heat equation}
\author{Roman Andreev}
\institute{%
	R.~Andreev
	\at
	RICAM,
	Altenberger-Str.~69, 4040 Linz, Austria
	\\
	\email{roman.andreev@oeaw.ac.at}
}
\let\olddate\date
\renewcommand{\date}[1]{\ifthenelse{\equal{#1}{}}{\def\makeheadbox{}}{\olddate{#1}}}
\date{%
}
\begin{document}

\maketitle

\begin{abstract}
	A concise Matlab implementation
	of a stable parallelizable 
	space-time Petrov-Galerkin
	discretization
	for parabolic evolution equations
	is given.
	Emphasis is on the reusability
	of spatial finite element codes.
	
	\keywords{%
		heat equation \and parabolic \and space-time discretization
		\and
		parallel \and preconditioning \and Matlab \and implementation
	}
\end{abstract}


\section{Introduction}

The spectrum of numerical methods for 
parabolic evolution equations is extremely broad,
which attests to the ubiquity and the relevance of 
such equations.
With the aim of developing
reliable massively parallel algorithms,
\eg for optimization problems
constrained by parabolic evolution equations,
several attempts have been made
to go beyond time-stepping methods,
see \cite[\Sec 5.1]{AndreevDiss} for
a modest attempt of an overview.
In this paper
we give a concise Matlab implementation, 
partly motivated by \cite{AlbertyCarstensenFunken1999},
of 
a specific space-time Petrov-Galerkin discretization
for parabolic evolution equations
from \cite{Andreev2013,AndreevDiss},
hoping
to provide a basis for possible further developments.
Spanning just a few lines of Matlab code,
it is parallelizable and stable in the Petrov-Galerkin sense,
which already distinguishes it from conventional methods 
for parabolic evolution equations.
Stability implies quasi-optimality of
the discrete solution in the natural solution spaces,
and is an important property in the resolution
of nonlinear problems.
Moreover,
the implementation is modular \wrt the spatial discretization,
admits time-dependent inputs and nonuniform temporal grids.
Since the algorithm is based on 
an iterative solution of a \emph{single} linear system,
another significant advantage to conventional methods is 
the ability to terminate the iteration
at a specified global accuracy.

The model parabolic evolution equation
under consideration is presented in \Sec \ref{s:model}
and is restated in a variational formulation.
The space-time Petrov-Galerkin discrete trial and test spaces 
that will be used to discretize the variational formulation
are introduced in \Sec \ref{s:sttp}.
In order to obtain stable algorithms
we develop in \Sec \ref{s:mrpg}
a generalization of the usual variational framework
for linear operator equations,
called the minimal residual Petrov-Galerkin discretization.
Choosing bases on the discrete trial and test spaces,
it leads to a linear system of generalized Gau{\ss} normal equations
along with a natural preconditioner.
In that framework,
certain norm-inducing operators play an important role. 
Specifically for the parabolic evolution problem,
these are defined in \Sec \ref{s:mn}.
In \Sec \ref{s:matrix}
we detail the Kronecker product structure
of the parabolic space-time operator and the norm-inducing operators
when assembled using space-time tensor product bases,
and comment on the data structures employed.
In the solution process,
the inverses of the matrix representations (of norm-inducing operators)
are required.
Their Kronecker product structure
is discussed in \Sec \ref{s:i:inv}.
The assembly procedure for the space-time source
is in \Sec \ref{s:i:b}.
A generalization of 
the LSQR algorithm of Paige and Saunders \cite{PaigeSaunders1982}
for the iterative resolution of the linear system
is given in \Sec \ref{s:i:glsqr}.
With those preparations,
the Matlab implementation is presented in \Sec \ref{s:code}.
Two numerical experiments are given in \Sec \ref{s:x}.
We conclude and point out some limitations in \Sec \ref{s:end}.

\section{Model problem and its space-time variational formulation} 
\label{s:model}

\newcommand{\domdim}{n}

Let $D \subset \IR^\domdim$, $\domdim \in \set{ 1, 2, 3 }$,
be a bounded connected domain
with a polyhedral boundary $\Gamma = \partial D$.
If $\domdim = 1$ then $D$ is an open bounded interval;
if $\domdim = 2$ then $D$ is a polygon;
if $\domdim = 3$ then $D$ is a polyhedron.
We partition ${\Gamma}$ into two disjoint subsets
${\Gamma_0}$ and ${\Gamma_N}$ such that 
$\bar\Gamma = \bar\Gamma_0 \cup \bar\Gamma_N$.
On ${\Gamma_0}$ we will impose 
homogeneous boundary conditions of Dirichlet type.
For that reason, 
the Dirichlet boundary $\Gamma_0$
is assumed
to be of positive measure
(\wrt the surface measure
which will subsequently be denoted by $\sigma$),
\ie it contains at least one endpoint if $\domdim = 1$;
a curve of positive length if $\domdim = 2$;
or
a surface of positive surface measure if $\domdim = 3$.
Let $J = (0, {T})$, ${T} > 0$, denote the temporal interval.

The model 
for parabolic evolution equations
that we consider
is the heat equation:
\begin{align}
\label{e:pde:u}
	\Dt u(t, x) - \DIV (a(x) \GRAD u(t, x)) & = f(t, x),
	&
	(t, x) & \in J \times D,
	\\
\label{e:pde:D}
	u(t, x) & = 0,
	&
	(t, x) & \in J \times {\Gamma_0},
	\\
\label{e:pde:N}
	a(x) \frac{\partial u}{\partial n}(t, x) & = g(t, x),
	&
	(t, x) & \in J \times {\Gamma_N},
	\\
\label{e:pde:0}
	u(t, x) & = h(x),
	&
	(t, x) & \in \{ 0 \} \times D
	.
\end{align}

Here, $a$, $f$, $g$ and $h$ are given scalar valued functions,
while $u$ is the unknown.
Further,
$\DIV$ and $\GRAD$
denote the divergence and the gradient operator
with respect to the spatial variable $x \in D$.
The derivative in the direction of the outward normal
at 
the Neumann part ${\Gamma_N}$ of the boundary
is denoted by $\frac{\partial u}{\partial n}$.
The precise meaning 
of the heat equation
will be fixed
by means of a well-posed space-time variational formulation
in the following.
The time-independent operator
``$\DIV (a(x) \GRAD u(t, x))$''
could be replaced
by the time-dependent one
``$\DIV( \Abf(t, x) \GRAD u(t, x) )$'',
where $\Abf \in L^\infty(J \times D; \IR_{\mathrm{sym}}^{\domdim \times \domdim})$,
without affecting most considerations below
(the technical reason
why this is possible is 
given in \cite[\Lem 4.4.1]{Fattorini2005}).
However,
if $\Abf$ is not a finite sum of separable functions,
the implementation becomes significantly less transparent,
and we therefore discard this case from the onset on;
on the other hand,
$\Abf$ being a finite sum of separable functions
entails modifications of secondary relevance to this exposition.

To motivate the space-time variational formulation
of the heat equation,
we formally test
the equation with a function $v_1$ on $J \times D$
and integrate in space and time;
the initial condition is tested with $v_2$ on $D$
and integrated in space.
Integration by parts is performed in space only,
and the two resulting conditions
(one ``$\forall v_1$'', the other ``$\forall v_2$'')
are added together.
The solution $u$ will be sought in the space $X$,
and the test functions are combined to 
$v = (v_1, v_2) \in Y := Y_1 \times Y_2$.
The spaces $X$ and $Y$ will be specified presently.
We write $\scalar{\cdot, \cdot}{D}$
for the $L^2(D)$ and the $[L^2(D)]^d$ scalar product,
while $\scalar{\cdot, \cdot}{\Gamma_N}$
is the scalar product on $L^2(\Gamma_N)$
for the boundary measure $\sigma$ introduced above.
Generally, we omit the dependence of the integrands on
the temporal variable $t$.
In this way we obtain
the continuous space-time variational formulation
\begin{align}
\label{e:vf}
	\text{find}
	\quad
	u \in X 
	\quad \text{such that} \quad
	B(u, v) = b(v)
	\quad
	\forall v \in Y,
\end{align}
where
the system bilinear form $B$ 
encoding the heat equation is 
\begin{align}
\label{e:B}
	B(u, v) :=
	\int_J \scalar{\Dt u, v_1}{D} dt
	+
	\int_J \scalar{a \GRAD u, \GRAD v_1}{D} dt
	+
	\scalar{u(0, \cdot), v_2}{D}
\end{align}
and
the load functional $b$
supplying the source term, as well as boundary and initial data,
is 
\begin{align}
\label{e:b}
	b(v) :=
	\int_J \scalar{f, v_1}{D} dt
	+
	\int_J \scalar{g, v_1}{\Gamma_N} dt
	+
	\scalar{ h, v_2 }{D}
	.
\end{align}

Integrating by parts also in time,
as in \eg \cite{BabuskaJanik1989,BabuskaJanik1990},
leads to
an alternative space-time variational formulation,
which, however,
will not be discussed below.

Let us introduce the abbreviations
\begin{align}
	V := H_{\Gamma_0}^1(D),
	\quad
	H := L^2(D)
	\TEXT{and}
	V' = [H_{\Gamma_0}^1(D)]',
\end{align}
where
$H_{\Gamma_0}^1(D)$ 
denotes the Sobolev space
of functions in $H^1(D)$ with vanishing trace on 
the Dirichlet boundary $\Gamma_0$,
and $[H_{\Gamma_0}^1(D)]'$ its dual.
We identify $H$ with its dual $H'$
by means of the Riesz isomorphism on $H$.
Then the duality pairing on $V \times V'$ (or $V' \times V$)
is
the continuous extension
of the $H$-scalar product on $V \times V$.
In this way we obtain a so-called Gelfand triple
of separable Hilbert spaces
\begin{align}
	V \hookrightarrow H 
	\cong 
	H' \hookrightarrow V'
\end{align}
with continuous and dense embeddings.
In order for
$B : X \times Y \to \IR$
to be a continuous bilinear form
and
for
$b : Y \to \IR$
to be a continuous linear functional,
it is now natural to take
\begin{align}
	X := L^2(J; V) \cap H^1(J; V')
	\quad\text{and}\quad
	Y := L^2(J; V) \times H
	.
\end{align}
For the definition of
the Bochner spaces $L^2(J; V)$, $H^1(J; V')$, and the like,
we refer to \eg \cite{Evans1998} or \cite[\Cha 1]{LionsMagenes1972}.
Then \eqref{e:B}--\eqref{e:b}
are well-defined for all $(u,v) \in X \times Y$
whenever
\begin{align}
	f \in L^2(J; V'),
	\quad
	g \in L^2(J; [H^{1/2}(\Gamma_N)]')
	\TEXT{and}
	h \in L^2(D)
	,
\end{align}
and
\begin{align}
\label{e:a}
	a \in L^\infty(D)
	\TEXT{with}
	0 < \essinf a \leq \esssup a < \infty
	.
\end{align}
For the remainder of the article
we assume
$f \in L^2(J; L^2(D))$.
The spaces $X$ and $Y$
are themselves Banach spaces for the norms
$\norm{ \cdot }{X}$ and $\norm{ \cdot }{Y}$
that are given by
\begin{align}
	\norm{ u }{X}^2 
	& :=
	\norm{ u }{ L^2(J; V) }^2 + \norm{ \Dt u }{ L^2(J; V') }^2,
	& & 
	u \in X,
\\
	\norm{ v }{Y}^2 
	& := 
	\norm{ v_1 }{ L^2(J; V) }^2 + \norm{ v_2 }{H}^2,
	& &
	v = (v_1, v_2) \in Y
	.
\end{align}
We recall
from \eg \cite[\Sec 5.9.2]{Evans1998} or \cite[\Cha 1]{LionsMagenes1972}
that
any (representant of any) $u \in X$
admits a modification on a negligible subset of $J$
such that the resulting function
coincides 
with a unique continuous $H$-valued function 
defined on the closed interval $\bar{J}$;
moreover,
the $C^0$ norm of the latter
is controlled by the $X$ norm of the former.
In other words,
the following embedding is continuous
\begin{align}
	X = L^2(J; V) \cap H^1(J; V') 
	\hookrightarrow
	C^0(\bar{J}; H),
\end{align}
and in particular
\begin{align}
	\exists C > 0:
	\quad
	\norm{ u(0) }{H} \leq C \norm{ u }{X}
	\quad
	\forall u \in X.
\end{align}
In this way, 
the initial value $u(0)$ of any $u \in X$
is well-defined in $H$.

With the above assumptions,
the space-time variational problem \eqref{e:vf}
has a unique solution $u \in X$ 
and the solution depends continuously on the functional $b \in Y'$,
see
\cite[\Thm 5.1]{SchwabStevenson2009}
and the references therein.
Hence, the solution $u$ also depends continuously 
on the input data $f$, $g$ and $h$ that 
define the load functional $b$ in \eqref{e:b}.

\section{Space-time tensor product discrete trial and test spaces}
\label{s:sttp}

The continuous space-time variational formulation \eqref{e:vf}
will be discretized
using
finite-dimensional discrete trial and test spaces
$X_h \subset X$ and $Y_h \subset Y$
built up from 
finite-dimensional ``univariate''
temporal subspaces
$E \subset H^1(J)$, $F \subset L^2(J)$,
and
spatial subspaces
$V_h \subset V$.
These spaces assume 
the space-time tensor product form
\begin{align}
\label{e:XhYh}
	X_h := E \tensor V_h
	\TEXT{and}
	Y_h := (F \tensor V_h) \times V_h
	.
\end{align}

A key feature of 
this discretization and 
the implementation given below
is the modularity with respect to
the spatial subspaces $V_h$.

To specify 
the temporal subspaces $E$ and $F$
we need to introduce some terminology.
A temporal mesh $\Tcal$ is 
a finite set of points in $\bar{J} = [0, T]$ containing $0$ and $T$.
The connected components of $J \setminus \Tcal$
are called 
the elements of $\Tcal$.
Let
$\max \Delta \Tcal$ denote
the maximal ``time-step'',
\ie the maximal length of an element of $\Tcal$.
For a temporal mesh $\Tcal$
let $\Tcal^\star$
denote the temporal mesh
obtained from $\Tcal$
by a uniform refinement
(each element is split into two smaller elements of equal length).
Concerning 
$E$ on the trial side and $F$ on the test side,
we will restrict ourselves to
two types of pairs
that differ in the choice of $F$.
\begin{description}
\item[\bf Type 1 temporal subspaces.]
	Given a temporal mesh $\Tcal_E$,
	we define $E$ as
	the standard space of continuous piecewise affine functions,
	and $F$ as
	the space of piecewise constant functions
	on $\Tcal_F := \Tcal_E$.
\item[\bf Type 2 temporal subspaces.]
	Given a temporal mesh $\Tcal_E$,
	the space $E$ is defined as above.
	Let another temporal mesh, $\Tcal_F$ be 
	obtained from $\Tcal_E$
	by a succession of uniform refinements,
	\ie $\Tcal_F = [\Tcal \mapsto \Tcal^\star]^n(\Tcal_E)$
	for some positive $n \in \IN$.
	Then $F$ is defined as
	the space of piecewise constant functions on $\Tcal_F$.
\end{description}

In the first case
the discrete variational formulation
\begin{align}
\label{e:uh0}
	\text{find}
	\quad
	u_h \in X_h
	\quad \text{such that} \quad
	B(u_h, v_h) = b(v_h)
	\quad
	\forall v_h \in Y_h
\end{align}
is 
an example of
continuous Galerkin time-stepping schemes
\cite{Hulme1972-118,AkrivisMakridakisNochetto2011}.
In the second case
the dimension of $Y_h$ is larger than that of $X_h$,
and 
the above discrete variational formulation 
is meaningless.
A generalization 
based on residual minimization 
is therefore introduced in \Sec \ref{s:mrpg}.
Concerning 
the stability of
the resulting minimal residual Petrov-Galerkin method
there is a fundamental difference
between Type 1 and Type 2
temporal subspaces.
This is the subject of the following two propositions
that summarize
the relevant main results from \cite[\Sec 5.2.3]{AndreevDiss}.
Note carefully
that the present concept of stability,
namely the validity of 
the discrete inf-sup condition 
\begin{align}
\label{e:is0}
	\gamma_h :=
	\inf_{u_h \in X_h \NZ}
	\sup_{v_h \in Y_h \NZ}
	\frac{
		B(u_h, v_h)
	}{
		\norm{ u_h }{X}
		\norm{ v_h }{Y}
	}
	> 0
\end{align}
(its role is discussed in \Sec \ref{s:mrpg})
uniformly in the choice of the temporal discretization,
is
different 
from \eg A-stability for time-stepping methods.

The following measure of self-duality
for the spatial subspace $V_h \subset V$
will be needed:
\begin{align}
	\kappa_h :=
	\inf_{ \chi_h' \in V_h \NZ }
	\sup_{ \chi_h  \in V_h \NZ }
	\frac
	{ \scalar{ \chi_h', \chi_h }{D} }
	{ \norm{ \chi_h' }{V'} \norm{ \chi_h }{V} }
	.
\end{align}
Note that $\kappa_h$ is bounded (independently of $V_h$)
and 
necessarily positive for a finite-dimensional $V_h$.

\begin{proposition}
\label{p:T2}
	Let $\set{0} \neq V_h \subset V$ be a finite-dimensional subspace.
	Let $E \subset H^1(J)$ and $F \subset L^2(J)$ be of Type 2.
	Then there exists a constant $\gamma_0^\star > 0$ 
	independent of $V_h$, $E$ and $F$,
	such that 
	the discrete inf-sup condition \eqref{e:is0} holds 
	for the discrete trial and test spaces \eqref{e:XhYh}
	with
	\begin{align}
	\label{e:p:T2:is}
		\gamma_h \geq \gamma_0^\star \kappa_h
		.
	\end{align}
\end{proposition}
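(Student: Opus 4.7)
The plan is to establish the inf-sup condition by explicitly constructing a Fortin-type test function. Given $u_h \in X_h$, I aim to build $v_h = (v_1, v_2) \in Y_h$ satisfying $B(u_h, v_h) \geq \gamma_0^\star \kappa_h \norm{u_h}{X} \norm{v_h}{Y}$ with $\gamma_0^\star$ depending only on $\amin$ and $\amax$. The initial component is handled by $v_2 := u_h(0) \in V_h$, which contributes $\norm{u_h(0)}{H}^2$ to $B(u_h, v_h)$. For $v_1 \in F \tensor V_h$ the defining property of Type~2 subspaces is essential: $\Dt E \subset F$, because derivatives of continuous piecewise affine functions on $\Tcal_E$ are piecewise constant on $\Tcal_E$, and hence on the refinement $\Tcal_F$. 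Therefore $\Dt u_h \in F \tensor V_h$, which gives room to use $\Dt u_h$, composed with a $V_h$-endomorphism, as a building block of the test without leaving $Y_h$.

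I would introduce the discrete Riesz operator $R_h : V_h \to V_h$ defined by $\scalar{R_h \xi_h, \chi_h}{D} = \scalar{\xi_h, \chi_h}{V}$ for all $\chi_h \in V_h$. A short computation from the definition of $\kappa_h$ yields the two-sided bound $\scalar{R_h^{-1}\phi_h, \phi_h}{D} \geq \kappa_h^2 \norm{\phi_h}{V'}^2$ and $\norm{R_h^{-1}\phi_h}{V} \leq \norm{\phi_h}{V'}$ for all $\phi_h \in V_h$. I then take $v_1 := \lambda_1 (P_F \tensor \ID) u_h + \lambda_2 R_h^{-1} \Dt u_h \in F \tensor V_h$, where $P_F$ is the $L^2$-orthogonal projection onto $F$ in time. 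Expanding $B(u_h, v_h)$ and using $\Dt u_h \in F \tensor V_h$ (so $P_F$ acts transparently in any pairing with $\Dt u_h$) produces four useful contributions---the initial term $\norm{u_h(0)}{H}^2$, the skew-symmetric piece $\tfrac{\lambda_1}{2}(\norm{u_h(T)}{H}^2 - \norm{u_h(0)}{H}^2)$ coming from $\int_J \scalar{\Dt u_h, u_h}{D}\, dt$, the elliptic diagonal $\lambda_1 \amin \norm{P_F u_h}{L^2(J;V)}^2$ (using $L^2(J)$-orthogonality of $P_F$ together with the time-independence of $a$), and the Riesz diagonal $\lambda_2 \kappa_h^2 \norm{\Dt u_h}{L^2(J;V')}^2$---plus one cross term bounded by $\amax \lambda_2 \norm{u_h}{L^2(J;V)} \norm{\Dt u_h}{L^2(J;V')}$ via the upper bound on $\norm{R_h^{-1}\cdot}{V}$.

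The main obstacle is twofold. First, the scalars $\lambda_1$ and $\lambda_2$ must be balanced to produce the correct linear power of $\kappa_h$: taking $\lambda_2$ proportional to $\kappa_h$ and $\lambda_1$ of order one and applying Young's inequality should absorb the cross term into the two diagonal contributions, yielding $B(u_h, v_h) \geq c \kappa_h \norm{u_h}{X}^2$; dividing by $\norm{v_h}{Y} \leq C \norm{u_h}{X}$ (which follows from $\norm{R_h^{-1} \Dt u_h}{L^2(J;V)} \leq \norm{\Dt u_h}{L^2(J;V')}$) then gives the claim. Second, the elliptic lower bound controls only $\norm{P_F u_h}{L^2(J;V)}$ rather than $\norm{u_h}{L^2(J;V)}$, and closing this gap is the genuinely delicate point where the Type~2 refinement is essential: the inclusion $\Tcal_F \supseteq \Tcal_E^\star$ makes the projection error $\norm{u_h - P_F u_h}{L^2(J;V)}$ small enough to be absorbed---either by replacing $P_F$ with a quasi-interpolation variant engineered to be exact on $E$ (still landing in $F \tensor V_h$), or by a standard $L^2$-projection error estimate combined with a transfer from $\norm{\Dt u_h}{L^2(J;V)}$ to $\norm{\Dt u_h}{L^2(J;V')}$ through the $\kappa_h$-based duality on $V_h$. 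The resulting $\gamma_0^\star$ depends only on $\amin$ and $\amax$ and is therefore independent of $V_h$, $E$, and $F$.
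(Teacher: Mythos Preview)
The paper does not prove this proposition; it merely states it as a summary of results from \cite[\Sec 5.2.3]{AndreevDiss}. So there is no in-paper proof to compare against, and I will assess the correctness of your outline on its own terms.

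Your overall strategy---the Fortin-type test $v_1 = \lambda_1 (P_F\otimes\ID)u_h + \lambda_2 R_h^{-1}\Dt u_h$, $v_2 = u_h(0)$, together with the observation that Type~2 refinement makes $P_F|_E$ injective with a uniform $L^2$ lower bound---is the right one and is essentially the standard argument. Two points, however, do not go through as you describe them.

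First, the ``cross term'' $\lambda_2\int_J\scalar{a\GRAD u_h,\GRAD R_h^{-1}\Dt u_h}{D}\,dt$ is not a term to be absorbed by Young's inequality: with the energy $V$-norm (which is how the paper defines $\norm{\cdot}{V}$, see \eqref{e:M:V}) it equals $\lambda_2\int_J\scalar{u_h, R_h^{-1}\Dt u_h}{V}\,dt = \lambda_2\int_J\scalar{\Dt u_h, u_h}{D}\,dt = \tfrac{\lambda_2}{2}(\norm{u_h(T)}{H}^2-\norm{u_h(0)}{H}^2)$, another total derivative. Recognizing this makes your parameters $\amin,\amax$ disappear and removes the Young step entirely.

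Second, and more seriously, your scaling argument does not deliver the \emph{linear} power of $\kappa_h$ stated in \eqref{e:p:T2:is}. The route ``take $\lambda_2\propto\kappa_h$, obtain $B(u_h,v_h)\geq c\kappa_h\norm{u_h}{X}^2$, then divide by $\norm{v_h}{Y}\leq C\norm{u_h}{X}$'' yields at best $\gamma_h\gtrsim\kappa_h^2$: the Riesz diagonal contributes only $\lambda_2\kappa_h^2\norm{\Dt u_h}{L^2(J;V')}^2$, so any lower bound of the form $c\kappa_h\norm{\Dt u_h}{L^2(J;V')}^2$ forces $\lambda_2\gtrsim\kappa_h^{-1}$, which then inflates $\norm{v_h}{Y}$. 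The way out is \emph{not} to bound $B$ and $\norm{v_h}{Y}$ separately against $\norm{u_h}{X}$, but to keep the intermediate quantity $W:=\norm{R_h^{-1}\Dt u_h}{L^2(J;V)}$ in both: with $\lambda_1=\lambda_2=1$ and $v_2=2u_h(0)$ one gets (using the first point) $B(u_h,v_h)\geq \norm{u_h(0)}{H}^2+\norm{P_Fu_h}{L^2(J;V)}^2+W^2$ and $\norm{v_h}{Y}^2\leq C(\norm{u_h(0)}{H}^2+\norm{P_Fu_h}{L^2(J;V)}^2+W^2)$, hence $B(u_h,v_h)/\norm{v_h}{Y}\geq c\sqrt{\norm{P_Fu_h}{L^2(J;V)}^2+W^2}\geq c\min(\sqrt{c_1},\kappa_h)\norm{u_h}{X}$, which is the desired linear bound. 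Finally, your second proposed fix for the $P_F$-gap (``transfer from $\norm{\Dt u_h}{L^2(J;V)}$ to $\norm{\Dt u_h}{L^2(J;V')}$ via $\kappa_h$-duality'') does not work---it reintroduces an inverse-inequality/CFL factor; stick with the direct temporal inf-sup $\norm{P_F e}{L^2(J)}\geq c\norm{e}{L^2(J)}$ on $E$, which is exactly where the Type~2 refinement enters.
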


We remark that $\gamma_0^\star$ in \eqref{e:p:T2:is},
as a function of 
the number of refinements between $\Tcal_E$ and $\Tcal_F$,
is monotonically increasing
and saturates (exponentially quickly).

Type 1 temporal subspaces,
on the other hand,
do not lead to unconditional stability
of the form \eqref{e:p:T2:is}.
To formalize this,
we define the CFL number 
\begin{align}
	\mathrm{CFL}_h
	:=
	\max\Delta \Tcal_E 
	\sup_{ \chi_h \in V_h \NZ }
	\frac{ \norm{ \chi_h }{V} }{ \norm{ \chi_h }{V'} }
	.
\end{align}

\begin{proposition}
\label{p:T1}
	Let $\set{0} \neq V_h \subset V$ be a finite-dimensional subspace.
	Let $E \subset H^1(J)$ and $F \subset L^2(J)$ be of Type 1.
	Then there exists a constant $\gamma_0 > 0$ 
	independent of $V_h$, $E$ and $F$,
	such that 
	the discrete inf-sup condition \eqref{e:is0} holds
	for the discrete trial and test spaces \eqref{e:XhYh}
	with
	\begin{align}
	\label{e:p:T1:is}
		\gamma_h \geq \gamma_0 \kappa_h \min\set{ 1, \mathrm{CFL}_h^{-1} }
		.
	\end{align}
	In general, 
	the dependence on 
	the CFL number cannot be improved.
\end{proposition}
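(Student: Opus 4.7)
The plan is to prove \eqref{e:p:T1:is} by the standard Fortin trick: for each $u_h \in X_h$ I would construct an explicit test function $v_h \in Y_h$ realizing, up to the claimed factor $\kappa_h \min\{1, \mathrm{CFL}_h^{-1}\}$, the inf-sup ratio. The sharpness assertion then requires exhibiting an explicit extremal example.

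The central observation specific to Type 1 is that $\Dt u_h \in F \tensor V_h$, since the derivative of a continuous piecewise affine function on $\Tcal_E$ is piecewise constant on $\Tcal_F = \Tcal_E$. Guided by the structure of $B$, I would set $v_{h,2} := u_h(0)$ and $v_{h,1} := \alpha\, w_h + \beta\, P_F u_h$, where $P_F$ denotes the $L^2(J)$-projector onto $F$ tensored with the identity on $V_h$, and $w_h(t) \in V_h$ is the pointwise-in-$t$ realizer of the $\kappa_h$ inf-sup: $\scalar{\Dt u_h(t), w_h(t)}{D} \geq \kappa_h \norm{\Dt u_h(t)}{V'} \norm{w_h(t)}{V}$ with $\norm{w_h(t)}{V} = \norm{\Dt u_h(t)}{V'}$; the piecewise-constant-in-$t$ structure of $\Dt u_h$ allows $w_h$ to be chosen in $F \tensor V_h$. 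The positive scalars $\alpha, \beta$ are tuned depending on whether $\mathrm{CFL}_h \leq 1$ or not.

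Expanding $B(u_h, v_h)$ produces four contributions. The $\Dt$-term against $\alpha w_h$ gives $\alpha \kappa_h \norm{\Dt u_h}{L^2(J;V')}^2$. The $\Dt$-term against $\beta P_F u_h$ telescopes to $\tfrac{\beta}{2}(\norm{u_h(T)}{H}^2 - \norm{u_h(0)}{H}^2)$ thanks to the identity $\int_J \scalar{\Dt u_h, P_F u_h}{D} dt = \int_J \scalar{\Dt u_h, u_h}{D} dt$, which is valid precisely because $\Dt u_h \in F \tensor V_h$. The initial-value term contributes $\norm{u_h(0)}{H}^2$. The diffusion term against $\beta P_F u_h$ recovers $\beta \int_J \scalar{a \GRAD u_h, \GRAD u_h}{D} dt$ --- equivalent to $\beta\,\norm{u_h}{L^2(J;V)}^2$ by \eqref{e:a} --- up to an error of order $\beta\, \mathrm{CFL}_h^2\, \norm{\Dt u_h}{L^2(J;V')}^2$, obtained by Pythagoras in $L^2(J)$ and the inverse chain $\norm{u_h - P_F u_h}{L^2(J;V)} \leq C \max\Delta\Tcal_E\, \norm{\Dt u_h}{L^2(J;V)} \leq C\, \mathrm{CFL}_h\, \norm{\Dt u_h}{L^2(J;V')}$, in which the last step uses $\Dt u_h(t) \in V_h$ together with the definition of $\mathrm{CFL}_h$. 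The diffusion term against $\alpha w_h$ is absorbed by Young's inequality. Balancing $\alpha \sim 1/\kappa_h$ and $\beta \sim \min\{1, \mathrm{CFL}_h^{-2}\}$ and checking $\norm{v_h}{Y} \lesssim (\kappa_h \min\{1, \mathrm{CFL}_h^{-1}\})^{-1}\,\norm{u_h}{X}$ closes the lower bound \eqref{e:p:T1:is}.

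For the sharpness claim I would take a spatial extremizer $\chi_h^\ast \in V_h$ of $\norm{\chi_h}{V}/\norm{\chi_h}{V'}$, a coarse uniform $\Tcal_E$, and $u_h(t,x) := e(t)\chi_h^\ast(x)$ with $e \in E$ chosen so that $\norm{u_h}{L^2(J;V)}$ and $\norm{\Dt u_h}{L^2(J;V')}$ are of comparable size; every admissible $v_{h,1} \in F \tensor V_h$, being piecewise constant on the same coarse mesh, then fails to simultaneously certify both contributions to $\norm{u_h}{X}$, yielding $\sup_{v_h} B(u_h, v_h)/\norm{v_h}{Y} \leq C\,\kappa_h\, \mathrm{CFL}_h^{-1}\, \norm{u_h}{X}$ when $\mathrm{CFL}_h \gg 1$. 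The main delicacy throughout is the coefficient bookkeeping: the error term in the diffusion estimate is quadratic in $\mathrm{CFL}_h$, and care is required so that the factor $\min\{1, \mathrm{CFL}_h^{-1}\}$ appears to the correct first power --- not squared --- in the final inf-sup estimate.
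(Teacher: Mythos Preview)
The paper does not contain a proof of this proposition: both \Pro \ref{p:T2} and \Pro \ref{p:T1} are stated as summaries of results from \cite[\Sec 5.2.3]{AndreevDiss}, with no argument given in the present text. There is therefore no in-paper proof to compare your proposal against.

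That said, your Fortin-type strategy is the natural one and is in the spirit of the cited reference. A few remarks on the details. Your key structural observation --- that for Type~1 one has $\Dt u_h \in F \tensor V_h$, so a pointwise-in-$t$ realizer $w_h$ of the $\kappa_h$ inf-sup can be taken in $F \tensor V_h$, and so $\int_J \scalar{\Dt u_h, P_F u_h}{D}\,dt$ telescopes exactly --- is correct and is indeed what drives the argument. The projection error identity $\int_J \scalar{a\GRAD u_h, \GRAD P_F u_h}{D}\,dt = \norm{P_F u_h}{L^2(J;V)}^2$ together with your inverse-estimate chain also yields the claimed $\mathrm{CFL}_h^2$-sized defect. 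Where you should be more careful is the parameter balancing: with $\alpha \sim \kappa_h^{-1}$ the cross term $\alpha \int_J (u_h, w_h)_V\,dt$ produces, after Young, a negative $b^2$-contribution of order $\kappa_h^{-2}$, which your stated choice $\beta \sim \min\{1,\mathrm{CFL}_h^{-2}\}$ cannot absorb. A cleaner normalization is to take $\alpha$ of order one (or $\sim \kappa_h$) and to track how the resulting lower bound on $B(u_h,v_h)$ and the upper bound on $\norm{v_h}{Y}$ combine; the first power $\min\{1,\mathrm{CFL}_h^{-1}\}$ then emerges from the ratio, not from a single factor. Your sharpness sketch is along the right lines but would benefit from an explicit choice of $e \in E$ (e.g.\ a single hat function on a one-element mesh) for which the computation can be carried out in closed form.
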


\section{Minimal residual Petrov-Galerkin discretization}
\label{s:mrpg}

In this section we consider 
an abstract continuous bilinear form $B : X \times Y \to \IR$,
where $X$ and $Y$ are Hilbert spaces
with norms $\norm{\cdot}{X}$ and $\norm{\cdot}{Y}$.
Let
\[
	\norm{B}{}
	:=
	\sup_{ w \in X \NZ }
	\sup_{ v \in Y \NZ }
	\frac{ |B(w, v)| }{ \norm{w}{X} \norm{v}{Y} }
\]
denote the norm of the bilinear form $B$.
Further,
let
$b$ be a linear continuous functional of $Y$.
For the remainder of the section,
two finite-dimensional subspaces
$X_h \subset X$ and $Y_h \subset Y$
are fixed.
We aim at relaxing
the discrete variational formulation
\eqref{e:uh0}
to admit the case $\dim X_h < \dim Y_h$.
To guarantee well-posedness,
the discrete inf-sup condition of $B$ on $X_h \times Y_h$
will be essential (\cf \Pro \ref{p:T2}):
\begin{align}
\label{e:is}
	\gamma_h :=
	\inf_{w_h \in X_h \NZ}
	\sup_{v_h \in Y_h \NZ}
	\frac{
		B(w_h, v_h)
	}{
		\norm{ w_h }{X}
		\norm{ v_h }{Y}
	}
	> 0
	.
\end{align}

We introduce
norms $\tnorm{\cdot}{X}$ and $\tnorm{\cdot}{Y}$
on $X_h$ and $Y_h$
that are
induced by
(positive definite)
linear continuous operators
$M : X_h \to X'$ and $N : Y_h \to Y'$
as follows:
\begin{align}
	\tnorm{ w_h }{X}^2 & := (M w_h)(w_h),
	\quad w_h \in X_h,
\\
	\tnorm{ v_h }{Y}^2 & := (N v_h)(v_h),
	\quad v_h \in Y_h
	.
\end{align}
The operators $M$ and $N$ are 
moreover assumed to be symmetric,
\ie
$(M u_h)(w_h) = (M w_h)(u_h)$ for all $w_h, u_h \in X_h$,
and similarly for $N$.
The Gram matrices
of the operators $M$ and $N$, defined below,
will essentially act as preconditioners for the discrete system,
and should therefore be easy to invert approximately,
cf.~\Sec \ref{s:i:inv}.
Let $0 < d_M \leq D_M < \infty$ and $0 < d_N \leq D_N < \infty$
be constants such that
\begin{align}
\label{e:cC}
	d_M \tnorm{\cdot}{X} \leq \norm{\cdot}{X} \leq D_M \tnorm{\cdot}{X}
	\TEXT{and}
	d_N \tnorm{\cdot}{Y} \leq \norm{\cdot}{Y} \leq D_N \tnorm{\cdot}{Y}
\end{align}
on $X_h$ and $Y_h$, respectively.
We emphasize that
the operators $M$ and $N$, the induced norms,
and hence the constants in \eqref{e:cC}
may depend on $h$,
but in this section,
the pair
$X_h \times Y_h$ is fixed
to lighten the notation.

Instead of the usual 
discrete variational formulation
we now introduce
the discrete (functional) residual minimization problem
\begin{align}
\label{e:uh}
	u_h := \argmin_{ w_h \in X_h } {R_h(w_h)},
\end{align}
where
\begin{align}
	{R_h(w_h)} := 
	\sup_{ v_h \in Y_h \NZ }
	\frac{ | B(w_h, v_h) - b(v_h) | }{ \tnorm{v_h}{Y} },
	\quad w_h \in X_h,
\end{align}
is the (functional) residual.
The following can be shown
\cite{Andreev2013}.

\begin{theorem}
	Let the discrete inf-sup condition \eqref{e:is} hold.
	Then there exists a unique $u_h \in X_h$
	for which 
	\begin{align}
		R_h(u_h) \leq R_h(w_h)
		\quad \forall w_h \in X_h
	\end{align}
	holds.
	Moreover,
	$u_h$ satisfies the quasi-optimality estimate
	\begin{align}
	\label{e:qo}
		\norm{ u - u_h }{X} 
		\leq
		C_h \inf_{ w_h \in X_h } \norm{ u - w_h }{X}
		\TEXT{with}
		C_h =
		\frac{ \norm{ B }{} }{ \gamma_h }
		\frac{ D_N }{ d_N }
	\end{align}
	for any $u \in X$
	such that
	$B(u, v) = b(v)$ for all $v \in Y$.
\end{theorem}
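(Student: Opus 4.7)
The plan is to recast the residual minimization \eqref{e:uh} as a closest-point projection in a finite-dimensional Hilbert space, extract from its Euler--Lagrange condition an equivalent Petrov--Galerkin formulation with \emph{optimal test functions}, and then conclude by a Xu--Zikatanov-type argument. Throughout, let $u \in X$ denote the continuous solution, so $B(u, v_h) = b(v_h)$ for all $v_h \in Y_h$.

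For existence and uniqueness, I would equip the dual $Y_h'$ with the dual norm of $\tnorm{\cdot}{Y}$; since $N$ is symmetric and positive definite, this makes $Y_h'$ a finite-dimensional Hilbert space. Defining $B_h : X_h \to Y_h'$ by $(B_h w_h)(v_h) := B(w_h, v_h)$ and setting $b_h := b|_{Y_h}$, one has $R_h(w_h) = \tnorm{B_h w_h - b_h}{Y'}$, so \eqref{e:uh} asks for the closest-point projection of $b_h$ onto $B_h(X_h)$. The inf-sup condition \eqref{e:is} renders $B_h$ injective (any $w_h$ with $B(w_h, \cdot)|_{Y_h} \equiv 0$ forces $\norm{w_h}{X} = 0$), so $B_h(X_h)$ has dimension $\dim X_h$ and the projection has a unique preimage $u_h \in X_h$.

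Differentiating $R_h^2$ at $u_h$, and introducing the \emph{optimal test operator} $T_h : X_h \to Y_h$ by $(N T_h w_h)(v_h) = B(w_h, v_h)$ for all $v_h \in Y_h$ --- equivalently, $T_h w_h$ is the Riesz representative in $(Y_h, \tnorm{\cdot}{Y})$ of $B_h w_h$ --- the Euler--Lagrange condition rewrites as $B(u_h, T_h w_h) = b(T_h w_h)$ for all $w_h \in X_h$. Setting $Y_h^{\mathrm{opt}} := T_h(X_h) \subseteq Y_h$, this yields the standard Galerkin orthogonality $B(u - u_h, v_h^{\mathrm{opt}}) = 0$ for all $v_h^{\mathrm{opt}} \in Y_h^{\mathrm{opt}}$. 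Writing $\gamma_h^{\mathrm{opt}}$ for the inf-sup constant of $B$ on $X_h \times Y_h^{\mathrm{opt}}$ in the norms $\norm{\cdot}{X}, \norm{\cdot}{Y}$, the Xu--Zikatanov projection argument (which, via Kato's identity for oblique projections on a Hilbert space, eliminates the additive $+1$ of classical Petrov--Galerkin C\'ea bounds) delivers
\[
	\norm{u - u_h}{X}
	\leq
	\frac{\norm{B}{}}{\gamma_h^{\mathrm{opt}}}
	\inf_{w_h \in X_h} \norm{u - w_h}{X}.
\]

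The crux, which determines the factor $D_N / d_N$ in $C_h$, is to bound $\gamma_h^{\mathrm{opt}} \geq (d_N / D_N)\gamma_h$. For $w_h \in X_h \NZ$ the candidate $v_h^{\mathrm{opt}} := T_h w_h$ satisfies $B(w_h, T_h w_h) = \tnorm{T_h w_h}{Y}^2$ by definition of $T_h$, and $\norm{T_h w_h}{Y} \leq D_N \tnorm{T_h w_h}{Y}$ by \eqref{e:cC}; consequently
\[
	\sup_{v_h^{\mathrm{opt}} \in Y_h^{\mathrm{opt}} \NZ}
	\frac{B(w_h, v_h^{\mathrm{opt}})}{\norm{v_h^{\mathrm{opt}}}{Y}}
	\geq
	\frac{\tnorm{T_h w_h}{Y}}{D_N}
	=
	\frac{1}{D_N}
	\sup_{v_h \in Y_h \NZ}
	\frac{B(w_h, v_h)}{\tnorm{v_h}{Y}}
	\geq
	\frac{d_N \gamma_h}{D_N} \norm{w_h}{X},
\]
the last step using $d_N \tnorm{v_h}{Y} \leq \norm{v_h}{Y}$ from \eqref{e:cC} together with \eqref{e:is}. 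Combining the two displays produces the announced $C_h = \norm{B}{} D_N / (\gamma_h d_N)$. The main obstacle is precisely this bookkeeping: $\tnorm{\cdot}{Y}$ is the norm in which the residual is measured (and hence in which the Riesz identification underlying $T_h$ is taken), while \eqref{e:is} is formulated in $\norm{\cdot}{Y}$, and the transfer between the two must be executed sharply in order to land on the stated constant without losing an additional factor.
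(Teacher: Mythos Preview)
Your proof is correct and follows essentially the same strategy as the paper's sketch: both establish that $u \mapsto u_h$ is a bounded linear projection on $X$ with operator norm at most $C_h$, and then invoke the Xu--Zikatanov lemma \cite[Lemma~5]{ZikatanovXu2003} to conclude. The paper packages the projection bound abstractly via the open mapping theorem applied to the solution operator $b \mapsto u_h$, whereas you obtain the same bound by recasting the residual minimization as a square Petrov--Galerkin method on $X_h \times Y_h^{\mathrm{opt}}$ with optimal test functions and controlling its inf-sup constant; these are equivalent realizations of the same estimate, and your version is a faithful fleshing-out of the sketch.
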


\begin{proof}[Sketch]
	Invoking the open mapping theorem,
	one can show that 
	the 
	the mapping $b \mapsto u_h$
	is well-defined, linear and continuous,
	and
	its norm is dominated by
	$\frac{1}{\gamma_h} \frac{D_N}{d_N}$.
	Thus
	the composition
	$u \mapsto B u \mapsto u_h$
	is a continuous projection
	with norm not exceeding
	$C_h$ given in \eqref{e:qo}.
	An application of 
	\cite[\Lem 5]{ZikatanovXu2003}
	finishes the argument.
\end{proof}

Let us describe a computable algebraic equivalent
of
the somewhat nonstandard
variational definition \eqref{e:uh}
of the discrete solution.
To that end
let
$\Phi \subset X_h$ and $\Psi \subset Y_h$
be bases for the respective discrete spaces.
Having fixed the pair $X_h \times Y_h$,
the possible dependence on $h$ is again omitted.
The algebraic representants
of the system bilinear form $B$,
of the load functional $b$
and
of
the norm-inducing operators $N$ and $M$,
are defined \wrt the chosen basis 
in the usual way,
\begin{align}
	\Bbf := B(\Phi, \Psi),
	\quad
	\bbf := b(\Psi),
	\quad
	\Nbf := (N \Psi)(\Psi),
	\quad
	\Mbf := (M \Phi)(\Phi),
\end{align}
or in componentwise notation
$\Bbf_{\psi \phi} = B(\phi, \psi)$,
$\bbf_{\psi} = b(\psi)$,
$\Nbf_{\psi' \psi} = (N \psi)(\psi')$,
$\Mbf_{\phi' \phi} = (M \phi)(\phi')$
for $\phi, \phi' \in \Phi$ and $\psi, \psi' \in \Psi$.
The basis functions
are used
to index the components of matrices and vectors.
Similarly,
$\IR^\Phi$
will denote vectors of real numbers indexed by $\phi \in \Phi$.
The matrix $\Bbf$ is injective if and only if
the discrete inf-sup condition \eqref{e:is} holds;
further, $\Nbf$ and $\Mbf$ are symmetric positive definite matrices
due to the analogous properties \eqref{e:cC}
of the operators $N$ and $M$.
Thus,
$\norm{ \wbf }{ \Mbf } := \sqrt{ \wbf^\T \Mbf \wbf }$, $\wbf \in \IR^\Phi$,
defines a norm,
and we use similar notation for other matrices.

With these definitions,
the discrete functional residual minimization \eqref{e:uh}
can be seen to be equivalent
to the discrete algebraic residual minimization 
\begin{align}
\label{e:ubf}
	\ubf := 
	\argmin_{\wbf \in \IR^\Phi} 
	\norm{ \Bbf \wbf - \bbf }{ \Nbf^{-1} }
	.
\end{align}
A vector $\ubf$
is a stationary point of \eqref{e:ubf}
if and only if
it satisfies
the first order optimality conditions,
namely the generalized Gau{\ss} normal equations
\begin{align}
\label{e:GNE}
	\Bbf^\T \Nbf^{-1} \Bbf \ubf
	=
	\Bbf^\T \Nbf^{-1} \bbf
	.
\end{align}

If the discrete inf-sup condition \eqref{e:is} holds,
the matrix $\Bbf^\T \Nbf^{-1} \Bbf$ is 
symmetric positive definite;
then,
the Gau{\ss} normal equations \eqref{e:GNE},
and therefore also the discrete minimization problem \eqref{e:ubf},
have a unique solution.
Finally, 
the matrices $\Mbf$ and $\Bbf^\T \Nbf^{-1} \Bbf$ 
are spectrally equivalent
with the bounds
\cite[\Sec 4.1]{AndreevDiss}
\begin{align}
\label{e:M-bdd}
	\gamma_h d_M d_N
	\norm{ \wbf }{\Mbf}
	\leq
	\norm{ \wbf }{\Bbf^\T \Nbf^{-1} \Bbf}
	\leq
	\norm{B}{} D_M D_N
	\norm{ \wbf }{\Mbf}
	\quad
	\forall \wbf \in \IR^\Phi
	.
\end{align}
Therefore,
$\Mbf$ is a preconditioner for 
the Gau{\ss} normal equations \eqref{e:GNE}.
%
%
By the estimate \eqref{e:M-bdd},
the quality of this preconditioner
is controlled by
the discrete inf-sup constant $\gamma_h$ in \eqref{e:is}
and the norm equivalence constants in \eqref{e:cC},
and
does not depend on the choice of the basis.

\section{Parabolic space-time preconditioners}
\label{s:mn}

In \Sec \ref{s:mrpg}
we admitted general
norm-inducing operators $M$ and $N$
on the fixed pair of
finite-dimensional discrete trial and test spaces $X_h \times Y_h$.
For the space-time variational formulation \eqref{e:vf}
several practical choice are available
\cite[\Cha 6]{AndreevDiss}.
Here, to simplify the exposition,
we will only use the canonical choice
of the Riesz mappings
defined (on all of $X$ and $Y$) by
\begin{align}
\label{e:M}
	(M w)(w) & := 
	\norm{ w }{X}^2 = 
	\norm{ w }{L^2(J; V)}^2 + \norm{ \Dt w }{L^2(J; V')}^2,
	\quad w \in X,
\intertext{and}
\label{e:N}
	(N v)(v) & :=
	\norm{ v }{Y}^2 = 
	\norm{ v_1 }{L^2(J; V)}^2 + \norm{ v_2 }{H}^2,
	\quad v = (v_1, v_2) \in Y
	.
\end{align}
These definitions extend to the off-diagonal
by the imposed symmetry of $M$ and $N$.
In these formulas,
the $V = H_{\Gamma_0}^1(D)$ and the $V' = [H_{\Gamma_0}^1(D)]'$ norms
are taken to be the ``energy norms'':
\begin{align}
\label{e:M:V}
	\norm{ \sigma }{V}^2
	& :=
	\scalar{ a \GRAD \sigma, \GRAD \sigma }{D},
	\quad \sigma \in V,
	\\
\label{e:M:V'}
	\norm{ \varphi }{V'}^2
	& :=
	\scalar{ \varphi, A^{-1} \iota \varphi }{D},
	\quad
	\varphi \in H = L^2(D).
\end{align}
Here, $A$ is the operator 
$A : V \to V'$, $\sigma \mapsto \scalar{a \GRAD \sigma, \GRAD \cdot}{D}$,
where $a$
is
the heat conduction coefficient from \eqref{e:pde:u}
satisfying the bounds \eqref{e:a},
and
$\iota \varphi \in V'$ is the functional on $V$
defined by 
$\iota \varphi := \scalar{\varphi, \cdot}{D}$, $\varphi \in H$.
For $\beta \in V'$,
the definition extends
to $\norm{ \beta }{V'}^2 := \beta(A^{-1} \beta)$.
%

\section{Kronecker product structure of the discretized operators}
\label{s:matrix}

\subsection{The system bilinear form}
\label{s:matrix:B}

Recall from \Sec \ref{s:model} the definition
of the system bilinear form
\begin{align*}
	B(u, v) :=
	\int_J \scalar{\Dt u, v_1}{D} dt
	+
	\int_J \scalar{a \GRAD u, \GRAD v_1}{D} dt
	+
	\scalar{u(0, \cdot), v_2}{D}
\end{align*}
for the space-time variational formulation
of the model parabolic evolution equation,
where 
$u \in X = L^2(J; V) \cap H^1(J; V')$
and
$v = (v_1, v_2) \in Y = L^2(J; V) \times H$.
As described in \Sec \ref{s:sttp},
we consider two types of 
discrete trial and test spaces.
In either case these have the form
\begin{align}
	X_h = E \tensor V_h \subset X
	\TEXT{and}
	Y_h = (F \tensor V_h) \times V_h \subset Y,
\end{align}
where
	$E \subset H^1(J)$ is the space of
	continuous piecewise affine functions on 
	a temporal mesh,
	$F \subset L^2(J)$ is the space of piecewise constant functions
	on the same mesh (Type 1)
	or
	on its $n$-fold uniform refinement (Type 2),
	and
	$V_h \subset V$ is a finite-dimensional subspace.

For the remainder of the section
we fix
the spatial discretization $V_h \subset V$
with a basis $\Sigma \subset V_h$,
and
the temporal mesh
$\Tcal_E = \{ 0 = t_0 < t_1 < \ldots < t_K = T \}$.
Let 
$\Tcal_F = \{ 0 = t_0' < t_1' < \ldots < t_{K'}' = T \}$
be either $\Tcal_E$ 
or 
any $n$-fold uniform refinement of $\Tcal_E$.
Let $E$ be as above,
and let $F$ 
denote the space
of piecewise constant functions \wrt 
the temporal mesh $\Tcal_F \supseteq \Tcal_E$,
which possibly refines $\Tcal_E$.
As basis for $E$
we take the usual hat functions
$\Theta := \set{ \theta_k : k = 0, \ldots, K } \subset E$
defined by $\theta_k(t_{\tilde{k}}) = \delta_{k \tilde{k}}$,
where $\delta_{k \tilde{k}}$ denotes the Kronecker delta.
In particular, the only function
that does not vanish at $t = 0$
is $\theta_0$.
As basis for $F$
we take the indicator functions
$\Xi := \set{ \xi_k := \chi_{(t_{k-1}', t_{k}')} : k = 1, \ldots, K' }$
on the elements $(t_{k-1}', t_{k}')$ of
the temporal mesh $\Tcal_F \supseteq \Tcal_E$.
These univariate bases
are first combined to the collections
$\Phi \subset X$
and
$\Psi_1 \subset Y_1$
as
\begin{align}
	\Phi := 
	\set{ \theta \tensor \sigma : \theta \in \Theta, \sigma \in \Sigma },
	\quad
	\Psi_1 := 
	\set{ \xi \tensor \sigma : \xi \in \Xi, \sigma \in \Sigma }
	.
\end{align}
These now yield bases 
\begin{align}
	\Phi \subset X_h
	\TEXT{and}
	\Psi :=
	(\Psi_1 \times \set{0}) \cup (\set{0} \times \Sigma) \subset Y_h
\end{align}
for the discrete trial and test spaces $X_h$ and $Y_h$.
Discretizing the bilinear form $B$
using these tensor product bases $\Phi$ and $\Psi$
as described in abstract terms in \Sec \ref{s:mrpg}
leads to
\begin{align}
	\Bbf = 
	\begin{pmatrix}
		\Cbf_t^{FE} \tensor \Mbf_x + \Mbf_t^{FE} \tensor \Abf_x
		\\
		\ebf_t^{E} \tensor \Mbf_x
	\end{pmatrix}
\end{align}
where
\textbf{a)}
	the ``temporal FEM'' matrices
	$\Cbf_t^{FE}, \Mbf_t^{FE} \in \IR^{\Xi \times \Theta}$
	and
	the row vector
	$\ebf_t^{E} \in \IR^{\Theta}$,
	have the components
	\begin{align}
		[\Cbf_t^{FE}]_{\xi \theta}
		=
		\int_J \theta'(t) \xi(t) dt,
		\quad
		[\Mbf_t^{FE}]_{\xi \theta}
		=
		\int_J \theta(t) \xi(t) dt,
		\quad
		[\ebf_t^{E}]_{\theta} 
		=
		\delta_{\theta_0 \theta}
		,
	\end{align}
	with the prime denoting the derivative \wrt $t$,
	and
\textbf{b)}
	the usual ``spatial FEM''
	mass and stiffness matrices
	$\Mbf_x, \Abf_x \in \IR^{\Sigma \times \Sigma}$
	are given by
	\begin{align}
		[\Mbf_x]_{\tilde\sigma \sigma}
		=
		\int_D \tilde\sigma(x) \sigma(x) dx,
		\quad
		[\Abf_x]_{\tilde\sigma \sigma}
		=
		\int_D a(x) \GRAD \tilde\sigma(x) \cdot \GRAD \sigma(x) dx
		.
	\end{align}

Let us comment 
on the assembly of the temporal FEM matrices.
First, if $\Tcal_E = \Tcal_F$ (Type 1)
then
$[\Cbf_t^{FE}]_{\xi \theta} \in \{ 1, -1, 0 \}$
and
$[\Mbf_t^{FE}]_{\xi \theta} \in \{ \frac12 |I|, 0 \}$
depending on whether $\xi$ and $\theta$ are both nonzero on 
the temporal element $I$ of $\Tcal_E$ having length $|I|$,
and on the sign of $\theta'$ there.
Therefore,
assume now that 
$\Tcal_F$ is obtained from $\Tcal_E$
by a succession of uniform refinements (Type 2).
Let $\Tcal_E^\star$ denote the first uniform refinement of $\Tcal_E$,
and let
$E^\star$ 
be the space of continuous piecewise affine function on $\Tcal_E^\star$
with the hat function basis $\Theta^\star$.
Let
$\Cbf_t^{F E^\star}$
and
$\Mbf_t^{F E^\star}$
denote the matrices as above, 
but with $\Tcal_E^\star$ in place of $\Tcal_E$.
Consider now the embedding operator $S^{E} : E \to E^\star$.
Define the prolongation matrix
$[\Sbf_t^{E}]_{\theta^\star \theta}$
by
$
	S^{E} \theta = 
	\sum_{\theta^\star \in \Theta^\star } 
	\Sbf_{\theta^\star \theta}^{E}
	\theta^\star
$,
$\theta \in \Theta$.
Then
\begin{align}
\label{e:MMS}
	\Cbf_t^{F E} = \Cbf_t^{F E^\star} \Sbf_t^{E}
	\TEXT{and}
	\Mbf_t^{F E} = \Mbf_t^{F E^\star} \Sbf_t^{E}
	.
\end{align}
Moreover,
denoting by $t_\theta \in \Tcal$ 
the node for which $\theta(t_\theta) = 1$,
and similarly for $t_{\theta^\star} \in \Tcal^\star$,
we have
``$\Tcal_E^\star = \Sbf_t^E \Tcal_E$'', \ie
\begin{align}
\label{e:TTS}
	t_{\theta^\star} = 
	\sum_{\theta \in \Theta} 
	[\Sbf_t^{E}]_{\theta^\star \theta}
	t_{\theta}
	\quad
	\forall
	\theta^\star \in \Theta^\star
	.
\end{align}

\subsection{The norm-inducing operators}

With the norms on $V$ and $V'$ taken to be \eqref{e:M:V}--\eqref{e:M:V'},
the discretized operators $M$ and $N$ defined
in \Sec \ref{s:mn}
assume the form
\begin{align}
\label{e:Mbf}
	\Mbf =
	\Mbf_t^{E} \tensor \Abf_x + \Abf_t^{E} \tensor (\Mbf_x \Abf_x^{-1} \Mbf_x)
\end{align}
with
the ``temporal mass and stiffness'' matrices
\begin{align}
	[\Mbf_t^{E}]_{\tilde\theta\theta} = 
	\int_J \tilde\theta(t) \theta(t) dt,
	\quad
	[\Abf_t^{E}]_{\tilde\theta\theta} =
	\int_J \tilde\theta'(t) \theta'(t) dt,
\end{align}
and
the block-diagonal matrix
\begin{align}
\label{e:Nbf}
	\Nbf = 
	\begin{pmatrix}
		\Mbf_t^{F} \tensor \Abf_x
		& \mathbf{0} \\ \mathbf{0} &
		\Mbf_x
	\end{pmatrix}
\end{align}
with the ``temporal mass matrix''
\begin{align}
	[\Mbf_t^{F}]_{\tilde\xi\xi} = 
	\int_J \tilde\xi(t) \xi(t) dt
	.
\end{align}

\subsection{Data structures}

The Kronecker product structure of these matrices
suggests regarding
a vector $\wbf \in \IR^{\Sigma \times \Theta}$
as a rectangular array 
with $\#\Sigma$ rows and $\#\Theta$ columns.
Let $\mathop{\mathrm{Vec}}(\wbf)$ denote
the ``vectorization'' of such an array,
\ie its columns are collected one after another into one column vector 
$\mathop{\mathrm{Vec}}(\wbf)$ 
of length $\#(\Sigma \times \Theta)$.
Now, 
if
$\Tbf \in \IR^{\Theta \times \Theta}$
and
$\Xbf \in \IR^{\Sigma \times \Sigma}$
are matrices
then
\begin{align}
\label{e:Vec}
	(\Tbf \tensor \Xbf) 
	\mathop{\mathrm{Vec}}(\wbf)
	=
	\mathop{\mathrm{Vec}}( \Xbf \wbf \Tbf^\T )
	.
\end{align}
In the implementation
we will exclusively use
the representation as rectangular arrays.
Moreover,
load vectors derived from load functionals $d \in Y'$
will be 
stored as pairs
$\dbf = (\dbf^1, \dbf^2)$
with
$\dbf^1 \in \IR^{\Sigma \times \Xi}$
(rectangular array with $\#\Sigma$ rows and $\#\Xi$ columns)
and
$\dbf^2 \in \IR^{\Sigma}$
(column vector of length $\#\Sigma$)
in the form of a Matlab structure \lstinline|{d1, d2}|.
To these,
a formula analogous to \eqref{e:Vec} applies.
In particular, 
we never store the operators 
$\Bbf$, $\Mbf$ and $\Nbf$ (or its inverses)
as matrices.

\section{Implementational aspects} \label{s:i}

\subsection{Inverses of the space-time parabolic preconditioners} \label{s:i:inv}

Consider 
the matrix representations
$\Mbf$ and $\Nbf$
of
the space-time parabolic preconditioners 
given in \eqref{e:Mbf} and \eqref{e:Nbf} in \Sec \ref{s:matrix}.
In order to solve 
the generalized Gau{\ss} normal equations \eqref{e:GNE}
with $\Mbf$ as preconditioner,
we need to (approximately) compute
the inverses $\Mbf^{-1}$ and $\Nbf^{-1}$.

\subsubsection{The test side} \label{s:i:inv:n}

For $\Nbf$ we simply use
the block-diagonal representation
\begin{align}
	\Nbf^{-1} = 
	\begin{pmatrix}
		(\Mbf_t^{F})^{-1} \tensor \Abf_x^{-1}
		& \mathbf{0} \\ \mathbf{0} & 
		\Mbf_x^{-1}
	\end{pmatrix}
	,
\end{align}
which again has Kronecker product structure.
For problems with a large number of spatial degrees of freedom,
the inverse $\Abf_x^{-1}$ may replaced by an approximate inverse,
\eg several cycles of a multigrid method.

\subsubsection{The trial side} \label{s:i:inv:m}

The representation of $\Mbf^{-1}$ requires 
more discussion,
as it is not (obviously) of Kronecker product structure.
We will obtain a simplified expression for $\Mbf^{-1}$
by diagonalizing $\Mbf_t^{E}$ and $\Abf_t^{E}$.
For this discussion, let us drop the superscript $(\cdot)^{E}$.
Consider therefore the generalized eigenvalue problem
of finding $\vbf \in \IR^{\Theta}$ and $\lambda \in \IR$
such that
$\Abf_t \vbf = \lambda \Mbf_t \vbf$
(in place of $\Mbf_t$,
one could use the mass-lumped version of $\Mbf_t$,
or simply the diagonal matrix that coincides 
with $\Mbf_t$ on the diagonal).
Let $\Ibf_t$ denote the identity matrix
of the same size as $\Mbf_t$ and $\Abf_t$.
Since $\Abf_t$ is symmetric positive semi-definite
and $\Mbf_t$ is symmetric positive definite,
all eigenvalues are nonnegative
and the eigenvectors may be chosen to form an $\Mbf_t$-orthogonal basis:
there exists a square matrix $\Vbf_t$ 
collecting the eigenvectors in its columns,
and a diagonal matrix $\Dbf_t$
containing the eigenvalues on the diagonal,
such that
\begin{align}
	\Vbf_t^\T \Mbf_t \Vbf_t = \Ibf_t
	\TEXT{and}
	\Abf_t \Vbf_t = \Mbf_t \Vbf_t \Dbf_t
	.
\end{align}
Let us set $\Tbf_t := \Mbf_t \Vbf_t$.
The first identity implies 
$\Vbf_t^{-1} = \Vbf_t^\T \Mbf_t = \Tbf_t^\T$,
which may be used to verify
\begin{align}
	\Mbf_t = \Tbf_t \Tbf_t^\T
	\TEXT{and}
	\Abf_t = \Tbf_t \Dbf_t \Tbf_t^\T
	.
\end{align}
Inserting these into the expression \eqref{e:Mbf} for $\Mbf$
we find
\begin{align}
	\Mbf = 
	(\Tbf_t \tensor \Ibf_x) 
	(\Ibf_t \tensor \Abf_x + \Dbf_t \tensor (\Mbf_x \Abf_x^{-1} \Mbf_x)) 
	(\Tbf_t^\T \tensor \Ibf_x)
\end{align}
and therefore
\begin{align}
	\Mbf^{-1} = 
	(\Vbf_t \tensor \Ibf_x) 
	(\Ibf_t \tensor \Abf_x + \Dbf_t \tensor (\Mbf_x \Abf_x^{-1} \Mbf_x))^{-1}
	(\Vbf_t^\T \tensor \Ibf_x)
	.
\end{align}
Let $\gamma_\theta$ be
the square root of the entry of $\Dbf_t$ on the diagonal
in position $\theta$,
\ie
$\Dbf_t = \mathop{\mathrm{diag}} ( (\gamma_\theta^2)_{\theta \in \Theta} )$.
Now, 
recall from \Sec \ref{s:matrix} the convention
that $\wbf \in \IR^{\Sigma \times \Theta}$
is stored as a rectangular array
with $\#\Theta$ columns.
Applying
$\Mbf^{-1}$
to such a vector
$\wbf$
will be done as follows
\begin{enumerate}
\item 
	Compute $\wbf^{(1)} := \wbf \Vbf_t$.
\item
	For each column $\wbf_\theta^{(1)}$ of $\wbf^{(1)}$
	compute the column $\wbf_\theta^{(2)}$ of $\wbf^{(2)}$
	by
	\begin{align}
	\label{e:CoreM}
		\wbf_\theta^{(2)}
		:=
		(\Abf_x + \gamma_\theta^2 (\Mbf_x \Abf_x^{-1} \Mbf_x))^{-1}
		\wbf_\theta^{(1)}
		.
	\end{align}
\item
	Compute $\wbf^{(3)} := \wbf^{(2)} \Vbf_t^\T$.
	Then $\Mbf^{-1} \wbf = \wbf^{(3)}$.
\end{enumerate}

The computation \eqref{e:CoreM}
can be done in parallel over the columns.
We will further make use of the following identity,
valid when applied to a real vector as in \eqref{e:CoreM},
\begin{align}
\label{e:Helmholtz}
	(\Abf_x + \gamma_\theta^2 (\Mbf_x \Abf_x^{-1} \Mbf_x))^{-1}
	=
	\mathop{\mathrm{Re}} 
	\mathbin\circ
	( \Abf_x + i \gamma_\theta \Mbf_x )^{-1}
	.
\end{align}
The right-hand-side
means
the solution of
the FEM discretization
of the Helmholtz operator
$(A + i \gamma_\theta \mathop{\mathrm{Id}})$
with imaginary frequency $i \gamma_\theta$,
followed by taking the real part.
Interestingly,
such Helmholtz problems appeared
in the context of
(parabolic) evolution equations
in \cite{SheenSloanThomee2003,BanjaiPeterseim2012},
but in the present case only in
the representation of the preconditioner $\Mbf^{-1}$.
These may therefore be inverted approximately.
%

\subsection{Assembly of the space-time load vector} \label{s:i:b}

As in the previous section,
let $F$ be the space of piecewise constant functions 
on a temporal mesh $\Tcal_F$;
let $\Xi$ be the basis on $F$
consisting of indicator functions
on the elements of $\Tcal_F$;
finally, $\Sigma \subset V_h$ is a basis for 
a finite-dimensional subspaces $V_h \subset V = H_{\Gamma_0}^1(D)$.
The load functional $b(v_1, v_2)$ defined in \Sec \ref{s:model} 
can be rewritten as
\begin{align}
	b(v_1, 0) + b(0, v_2) =
	\int_J
	\Set{ \scalar{f, v_1}{D} + \scalar{g, v_1}{\Gamma_N} } 
	dt
	+
	\scalar{ h, v_2 }{D},
\end{align}
for $(v_1, v_2) \in Y = L^2(J; V) \times H$,
where
$h \in H = L^2(D)$,
$f \in L^2(J; H)$ and $g \in L^2(J; H^{1/2}(\Gamma_N))$
are given functions.
Accordingly,
the load vector $\bbf$
that we obtain as outlined in \Sec \ref{s:mrpg}
consists of two parts,
say,
$\bbf^1 \in \IR^{\Sigma \times \Xi}$ and $\bbf^2 \in \IR^{\Sigma}$.
The second part is given 
in componentwise notation by
\begin{align}
	[\bbf^2]_{\sigma} = \scalar{h, \sigma}{D} + \scalar{0, \sigma}{\Gamma_N},
	\quad
	\sigma \in \Sigma
	,
\end{align}
which is the usual spatial FEM load vector for 
the function $h \in L^2(D)$
with zero Neumann data.
In the remainder of this section
we describe 
how $\bbf^1$ may be obtained
from the usual spatial FEM load vectors.
Observe first 
that,
fixing an indicator function $\xi \in \Xi$
supported on the closed interval $I$,
we may employ a quadrature rule on $I$ to define
\begin{align}
	[\bbf^1]_{\sigma \xi}
	& :=
	\sum_{r \in \IN}
	w_r^I
	\Set{ \scalar{f(t_r^I,\cdot), \sigma}{D} + \scalar{g(t_r^I,\cdot), \sigma}{\Gamma_N} } 
	\\
	& \approx
	\int_{I} 
	\Set{ \scalar{f, v_1}{D} + \scalar{g, v_1}{\Gamma_N} } 
	dt
	,
	\quad
	\sigma \in \Sigma,
\end{align}
where $t_r^I \in I$ are the quadrature nodes
and $w_r^I \in \IR$ are the quadrature weights
(equal to zero for $r \in \IN$ large enough).
Now, each term
in the curly brackets $\Set{ \ldots }$
is a load vector for 
the function $f(t_r^I,\cdot)$
with Neumann data $g(t_r^I,\cdot)$,
which can be assembled using
standard spatial FEM routines.
This presupposes sufficient regularity 
of the functions $t \mapsto f(t, \cdot)$ and $t \mapsto g(t, \cdot)$
on the interval $I$.
The computation of individual columns of $\bbf$
(\ie for each given $\xi \in \Xi$)
can be performed in parallel.


If $\Tcal_F = \Tcal_E$
then using the trapezoidal rule on each temporal element $I$
leads to a system
$\Bbf \ubf = \bbf$
that
admits a unique solution,
which on the nodes of $\Tcal_E$
coincides
with
the solution
obtained by
the Crank-Nicolson time-stepping method
\cite{Hulme1972-118,AkrivisMakridakisNochetto2011}.

\subsection{Generalized LSQR algorithm} \label{s:i:glsqr}

In \Sec \ref{s:mrpg}
the minimal residual Petrov-Galerkin discretization
was shown to lead to 
a system of generalized Gau{\ss} normal equations.
%
%
One option to solve the system is
the LSQR algorithm of
Paige and Saunders \cite{PaigeSaunders1982}
applied directly to 
the preconditioned equation
$
	\widetilde\Bbf^\T
	\widetilde\Bbf
	\widetilde\ubf
	=
	\widetilde\Bbf^\T
	\widetilde\bbf
$
with
$\widetilde\Bbf = \Nbf^{-1/2} \Bbf \Mbf^{-1/2}$,
$\widetilde\ubf = \Mbf^{-1/2} \ubf$,
$\widetilde\bbf = \Nbf^{-1/2} \bbf$,
where $\Mbf^{-1/2}$ and $\Nbf^{-1/2}$
denote
their inverses of the square roots
of the (s.p.d.) matrices $\Mbf$ and $\Nbf$.
%
%
We reformulate
the algorithm
it in such a way that only
the inverses $\Mbf^{-1}$ and $\Nbf^{-1}$
need to be applied, but not the square roots,
\cf \cite{Benbow1999}.
To compute
an approximate
solution ${\ubf}_{i^\star} \approx {\ubf}$
to the generalized Gau{\ss} normal equations \eqref{e:GNE}
using $\Mbf$ as a preconditioner,
the algorithm proceeds as follows:
%
\begin{enumerate}
	\item
		Initialize
		\begin{enumerate}
		\item
			$\dbf_0 := \zerobf$
		\item
			$(\widehat{\vbf}_0, {\vbf}_0, \beta_0) := \textsc{Normalize}(\bbf, \Nbf)$
		\item
			$(\widehat{\wbf}_0, {\wbf}_0, \alpha_0) := \textsc{Normalize}(\Bbf^\T \widehat{\vbf}_0, \Mbf)$
		\item
			$\rho_0 := \norm{(\alpha_0, \beta_0)}{2}$
		\item
			${\ubf}_0 := \zerobf$
		\item
			$\delta_1 = \alpha_0$, $\gamma_1 = \beta_0$
		\end{enumerate}
	\item
		For $i = 1, 2, \ldots, i^\star$ do the following steps (until convergence)
		\begin{enumerate}
		\item
			$\dbf_{i} := \widehat{\wbf}_{i-1} - (\alpha_{i-1} \beta_{i-1} / \rho_{i-1}^2) \dbf_{i-1}$
		\item
			$
				(\widehat{\vbf}_{i}, {\vbf}_{i}, \beta_{i})
				:=
				\textsc{Normalize}(\Bbf \widehat{\wbf}_{i-1} - \alpha_{i-1} {\vbf}_{i-1}, \Nbf)
			$
		\item
			$
				(\widehat{\wbf}_{i}, {\wbf}_{i}, \alpha_{i})
				:=
				\textsc{Normalize}(\Bbf^\T \widehat{\vbf}_{i} - \beta_{i} {\wbf}_{i-1}, \Mbf)
			$
		\item
			$\rho_i := \norm{(\delta_i, \beta_{i})}{2}$,
		\item
			${\ubf}_i := {\ubf}_{i-1} + (\delta_i \gamma_i / \rho_i^2) \dbf_i$
		\item
			$\delta_{i+1} := -\delta_i \alpha_{i} / \rho_i$,
			$\gamma_{i+1} := \gamma_i \beta_{i} / \rho_i $
		\end{enumerate}
\end{enumerate}
Here,
$
	\textsc{Normalize} :
	(\sbf, \Sbf) \mapsto (\widehat{\zbf}, {\zbf}, z)
$,
with $\Sbf$ s.p.d.,
is the procedure:
\begin{enumerate}
	\item
	Solve $\Sbf \widehat{\sbf} =  \sbf$ for $\widehat{\sbf}$
	\item
	Set $z := \sqrt{ \sbf^\T \widehat{\sbf} }$
	and $(\widehat{\zbf}, {\zbf}) := (z^{-1} \widehat{\sbf}, z^{-1} \sbf)$
\end{enumerate}

As long as the order of the statements is unchanged,
the subscripts $(\cdot)_i$, etc.,
can be ignored in the implementation.
In our implementation
we will limit the number of iterations
and
allow the iteration to exit
when
the normal equations residual 
$
	\norm{ 
	\widetilde\Bbf^\T
	\widetilde\Bbf
	\widetilde\ubf
	-
	\widetilde\Bbf^\T
	\widetilde\bbf }{2}
	=
	\norm{ \Bbf^\T \Nbf^{-1} \Bbf \ubf_i - \Bbf^\T \Nbf^{-1} \bbf }{ \Mbf^{-1} }
$
falls below a threshold.
This residual is available
for each $i = 0, 1, \ldots$
as $|\delta_{i+1}| \gamma_{i+1}$
following step (f).
See \cite{ChangPaigeTitleyPeloquin2009}
for further discussion of
stopping criteria for the LSQR algorithm.

\section{Overview of the Matlab code} \label{s:code}

In the code,
the naming convention
parallels
that of \Secs \ref{s:matrix} and \ref{s:i}.
Thus, the mesh $\Tcal_F$ is called \lstinline|TF|,
the matrix $\Mbf_t^{FE}$ is called \lstinline|MtFE|, 
the vector $\ubf$ is called \lstinline|u|,
and so on.
The subroutines
that are related to 
the temporal FEM
are prefixed with \lstinline|femT_|,
those related to spatial FEM with \lstinline|femX_|.
The only subroutine that mixes
temporal and spatial FEM
is \lstinline|femTX_assemLoad|
for the assembly of the space-time load vector $\bbf$.

\subsection{Main file} \label{s:code:main}

%
We provide 
the commented code
for the main file.
The code is embedded in a Matlab function \lstinline|spacetime|.

\begin{lstlisting}[name=main]
function spacetime
\end{lstlisting}
Initialize the spatial FEM 
(load the mesh, etc., here into global variables)
and
compute the spatial FEM mass and stiffness matrices.
The flag \lstinline|true| indicates
that this is the first-time initialization.
\begin{lstlisting}[name=main]
	femX_init(true)
	[Mx, Ax] = femX_MA();
\end{lstlisting}
Define a temporal mesh with \lstinline|K| elements
on the interval $J = (0, T)$ with $T = 20$:
\begin{lstlisting}[name=main]
	T = 20; K = 100;
	TE = T * sort([0, rand(1, K-1), 1]);
\end{lstlisting}
Determine
the number of uniform refinements
to go from $\Tcal_E$ to $\Tcal_F$,
see \Sec \ref{s:matrix}.
Setting \lstinline|use_mrpg = true| amounts
to one refinement.
\begin{lstlisting}[name=main]
	use_mrpg = true;
\end{lstlisting}
Compute the refined mesh $\Tcal_F$
and the temporal FEM matrices
as described in \Sec \ref{s:matrix}:
\begin{lstlisting}[name=main]
	[MtFE, CtFE, TF] = femT_assemFE(TE, use_mrpg);
	[MtE, AtE] = femT_assemE(TE);
	MtF = femT_assemF(TF); 
\end{lstlisting}
Define the function that computes $\wbf \mapsto \Bbf \wbf$
using the matrix representation given in \Sec \ref{s:matrix}:
\begin{lstlisting}[name=main]
	function Bw = B(w)
		Bw = { Mx * w * CtFE' + Ax * w * MtFE', Mx * w(:,1) };
	end
\end{lstlisting}
Define the function that computes $\vbf \mapsto \Bbf^\T \vbf$
using the matrix representation given in \Sec \ref{s:matrix}.
\begin{lstlisting}[name=main]
	function Btv = Bt(v)
		Btv = Mx' * v{1} * CtFE + Ax' * v{1} * MtFE;
		Btv(:,1) = Btv(:,1) + Mx' * v{2};
	end
\end{lstlisting}
Define the function that computes $\dbf \mapsto \Nbf^{-1} \dbf$
with $\Nbf^{-1}$ as in \Sec \ref{s:i:inv:n}:
\begin{lstlisting}[name=main]
	function iNd = iN(d)
		iNd = { Ax \ (d{1} / MtF), Mx \ d{2} };
	end
\end{lstlisting}
Define the function that computes $\wbf \mapsto \Mbf^{-1} \wbf$
using the algorithm 
and formula \eqref{e:Helmholtz}
given in \Sec \ref{s:i:inv:m}.
Two comments are in order.
First, 
the symmetric positive semi-definite matrix $\Abf_t^E$
is singular,
possibly leading to a small negative 
approximately computed eigenvalue.
Before taking the square root we therefore
round negative eigenvalues to zero.
Second,
the result of an application of \eqref{e:Helmholtz}
to a real vector is again a real vector,
which is enforced by taking the \lstinline|real| part.
This truncates the round-off error accumulated
in the imaginary part
and reestablishes the data type of reals.
The loop can be performed in parallel.
%
\begin{lstlisting}[name=main]
	[VtE, DtE] = eig(full(AtE), full(MtE));
	gamma = sqrt(max(0,diag(DtE)));
	function iMw = iM(w)
		iMw = w * VtE;
		parfor j = 1:length(TE)
			iMw(:,j) = real((Ax + 1i * gamma(j) * Mx) \ iMw(:,j));
		end
		iMw = iMw * VtE';
	end
\end{lstlisting}
Compute the space-time load vector $\bbf$
using a quadrature rule
according to \Sec \ref{s:i:b}.
The Matlab functions \lstinline|f|, \lstinline|g| and \lstinline|h|
are assumed to be available,
\eg as Matlab files in the same directory.
Further,
\lstinline|QR_Trapz|
is the trapezoidal quadrature rule
as described in \Sec \ref{s:code:load}.
\begin{lstlisting}[name=main]
	b = femTX_assemLoad(TF, @f, @g, @h, QR_Trapz());
\end{lstlisting}
Set the tolerance and the maximal number of iterations
for
the generalized LSQR algorithm from \Sec \ref{s:i:glsqr}
and run it
on the Gau{\ss} normal equations \eqref{e:GNE} 
with $\Mbf$ as preconditioner.
The solver may provide additional diagnostic
output parameters that are ignored here.
\begin{lstlisting}[name=main]
	tol = 1e-4; maxit = 100;
	u = glsqr(@B, @Bt, b, tol, maxit, @iM, @iN);
\end{lstlisting}
Finally, 
plot several temporal snapshots (equispaced in time)
of
the numerical solution.
These are obtained by linear interpolation
from the values at temporal nodes
to $t = 0, 1, \ldots, 5$,
and stored in the array \lstinline|U|.
\begin{lstlisting}[name=main]
	t = 0:5; U = interp1(TE, u', t)';
	for k = 1:size(U,2)
		subplot(1, size(U,2), k); femX_show(U(:,k));
	end
\end{lstlisting}
Here the Matlab function \lstinline|spacetime| ends.
\begin{lstlisting}[name=main]
end
\end{lstlisting}

\subsection{Assembly of the space-time load vector} \label{s:code:load}

The assembly of the space-time load vector $\bbf$
is performed in the Matlab function \lstinline|femTX_assemLoad|.
It receives 
the mesh $\Tcal_F$ as a (row) vector \lstinline|TF|,
as well as function handles \lstinline|f|, \lstinline|g| and \lstinline|h|.
The function handles \lstinline|f| and \lstinline|g|,
when called with one argument, say $t_r$,
are expected to return function handles
to functions 
that depend on the spatial variable only
and describe
$f(t_r, \cdot)$ and $g(t_r, \cdot)$.
The function handle \lstinline|h|
describes the initial condition $h(\cdot)$.
Finally, \lstinline|QuadRule| is a function handle
that
receives a 2-component vector describing the endpoints of an interval $I$
and
returns 
a quadrature rule on $I$
in the form
of a vector of nodes $(t_r^I)_{r = 1, \ldots, R}$
and a vector of corresponding weights $(w_r^I)_{r = 1, \ldots, R}$,
as well as the number of nodes $R$.
\begin{lstlisting}[name=load]
function b = femTX_assemLoad(TF, f, g, h, QuadRule)
\end{lstlisting}
Assemble the part $\bbf^2$ of the load vector
from the initial condition $h(\cdot)$.
\begin{lstlisting}[name=load]
	b2 = femX_b(h, @(varargin)0);
\end{lstlisting}
Assemble $\bbf^1$ by iterating over 
the temporal elements defined by $\Tcal_F$
as described in \Sec \ref{s:i:b}.
The outer loop can be computed in parallel.
\begin{lstlisting}[name=load]
	b1 = zeros(length(b2), length(TF)-1);
	parfor k = 1:size(b1,2)
		[tI, wI, R] = QuadRule(TF([k k+1]));
		for r = 1:R
			b1(:,k) = b1(:,k) + wI(r) * femX_b(f(tI(r)), g(tI(r)));
		end
	end
\end{lstlisting}
The assembled vectors $\bbf^1$ and $\bbf^2$
are combined into a Matlab structure.
\begin{lstlisting}[name=load]
	b = {b1, b2};
\end{lstlisting}
Here the Matlab function \lstinline|femTX_assemLoad| ends.
\begin{lstlisting}[name=load]
end
\end{lstlisting}

\subsection{Assembly of the temporal FEM matrices 1} \label{s:code:femT1}

Let us comment in some detail
on the computation
of the temporal FEM matrices $\Cbf_t^{FE}$ and $\Mbf_t^{FE}$
by means of 
the Matlab function \lstinline|femT_assemFE|.
It receives a temporal mesh $\Tcal_0 \supseteq \Tcal_E$
and the number \lstinline|nref| of uniform refinements
to be performed on $\Tcal_0$
to obtain $\Tcal_F$.
If \lstinline|nref| is interpreted as one or zero
if it has the logical value
\lstinline|true| or \lstinline|false|, respectively.
The output is
$\Cbf_t^{FE}$ and $\Mbf_t^{FE}$,
and the \lstinline|nref|-fold refinement of $\Tcal_0$
stored again in the variable \lstinline|T0|.
\begin{lstlisting}[name=assemFE]
function [MtFE, CtFE, T0] = femT_assemFE(T0, nref)
\end{lstlisting}
If no refinement is to be performed,
the temporal FEM matrices 
$\Cbf_t^{FE}$ and $\Mbf_t^{FE}$
can be computed directly.
Temporal meshes
are stored as row vectors.
\begin{lstlisting}[name=assemFE]
	K = length(T0);
	if (nref == 0)
		MtFE = spdiags(diff(T0)' * [1/2 1/2], 0:1, K-1, K);
		CtFE = diff(speye(K));
		return
	end
\end{lstlisting}
Otherwise,
the prolongation matrix $\Sbf_t^E$
is computed first, see \Sec \ref{s:matrix:B}.
As can be seen from \eqref{e:TTS},
it coincides with
the matrix representation
of an interpolation operator
(a more efficient but lengthier implementation
is possible here).
\begin{lstlisting}[name=assemFE]
	StE = sparse(interp1(1:K, eye(K), 1:(1/2):K));
\end{lstlisting}
Perform a uniform refinement
of the current mesh $\Tcal_0$
and
pass the new mesh recursively to \lstinline|femT_assemFE|.
Hence,
the number of refinements still to be
performed decreases by one.
We 
obtain the matrices $\Mbf_t^{F E^\star}$ and $\Cbf_t^{F E^\star}$
with respect to the meshes
$\Tcal_F$ and $\Tcal_0^\star$,
and the \lstinline|nref|-th refinement of $\Tcal_0$
(which is the desired $\Tcal_F$).
\begin{lstlisting}[name=assemFE]
	[MtFEs, CtFEs, T0] = femT_assemFE((StE * T0')', nref-1);
\end{lstlisting}
Apply the prolongation matrix
according to \eqref{e:MMS}
to obtain $\Mbf_t^{F E}$ and $\Cbf_t^{F E}$.
\begin{lstlisting}[name=assemFE]
	MtFE = MtFEs * StE; CtFE = CtFEs * StE;
\end{lstlisting}
Here the Matlab function \lstinline|femT_assemFE| ends.
\begin{lstlisting}[name=assemFE]
end
\end{lstlisting}

\subsection{Assembly of the temporal FEM matrices 2} \label{s:code:femT2}

The computation of 
the temporal stiffness and mass matrices 
$\Abf_t^E$, $\Mbf_t^E$ and $\Mbf_t^F$ is a routine task.
The Matlab code is
provided
for completeness.
\begin{lstlisting}[name=assemE]
function [MtE, AtE] = femT_assemE(TE)
	K = length(TE); h = diff(TE); g = 1./h; 
	MtE = spdiags([h 0; 0 h]' * [1 2 0; 0 2 1]/6, -1:1, K, K);
	AtE = spdiags([g 0; 0 g]' * [-1 1 0; 0 1 -1], -1:1, K, K);
end
\end{lstlisting}
\begin{lstlisting}[name=assemF]
function MtF = femT_assemF(TF)
	K = length(TF)-1;
	MtF = sparse(1:K, 1:K, abs(diff(TF)));
end
\end{lstlisting}

\section{Numerical experiments} \label{s:x}

We present two numerical experiments.
In the first,
we focus on 
the dependence of the condition number of 
the preconditioned system matrix
as a function of temporal elements.
In the second,
we focus
on the execution times.
To illustrate the modularity,
the two experiments 
are based on
two different packages for
spatial finite element discretization.
For simplicity
we set
$a = 1$ for the heat conduction coefficient
and
$f(t, x) = \sin(t)$ for the source term in \eqref{e:pde:u},
as well as
$g = 0$ for the Neumann data \eqref{e:pde:N}
and 
$h = 0$ for the initial condition \eqref{e:pde:0}.

\subsection{Dependence on the temporal resolution} \label{s:x:B}

In the first experiment
we use
the 
2d spatial FEM discretization
from \cite[\Secs 2--8]{AlbertyCarstensenFunken1999}.
The mesh consists of 6 quadrilaterals and 4 triangles,
carrying bilinear and linear finite elements, respectively,
see \cite{AlbertyCarstensenFunken1999} for details.
The code described in \Sec \ref{s:code}
(and some graphical postprocessing)
produces a figure similar to \Fig \ref{f:A}.

\begin{figure}[htbp]
	\begin{center}
		\includegraphics[width=.9\textwidth]{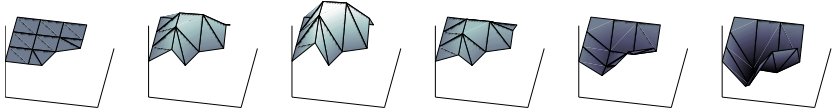}
		\caption{
			Snapshots of the solution 
			at $t = 0, 1, 2, 3, 5$,
			produced by the code given in \Sec \ref{s:code}.
			Dark corresponds to low, bright to high values.
		}
		\label{f:A}
	\end{center}
\end{figure}


In \Fig \ref{f:B}
we document
a)
the accuracy of the discrete solution with respect to the space-time norm
$\norm{\cdot}{X}$ defined in \eqref{e:M},
b)
the number of iterations of the generalized least squares solver
for
equidistant temporal meshes $\Tcal_E$ of different size
and
different numbers of refinements between $\Tcal_E$ and $\Tcal_F$,
and
c)
the condition number of the preconditioned system matrix
$\widetilde\Bbf^\T \widetilde\Bbf$, 
see \Sec \ref{s:i:glsqr}.
For measuring the accuracy of the discrete solution,
a reference solution on a fine temporal mesh is used.
We observe that
for
Type 1 temporal subspaces
(no refinement of the test space; this method is equivalent to the Crank-Nicolson time-stepping scheme),
the number of iterations 
first increases with $\# \Tcal_E$
but then decreases again.
This is explained by the increasing size of the system,
but decreasing condition number.
For Type 2 temporal subspaces 
(one or more temporal refinements of the test space),
the number of iterations
is consistently smaller.
Indeed, the condition number is approximately $2$
independently of $\# \Tcal_E$.
These observations are in agreement
with \Pros \ref{p:T2} and \ref{p:T1}.
Replacing the trapezoidal rule
by a higher-order quadrature
does not significantly change the results.
%

\begin{figure}[htbp]
	\begin{center}
		\includegraphics[width=.7\textwidth]{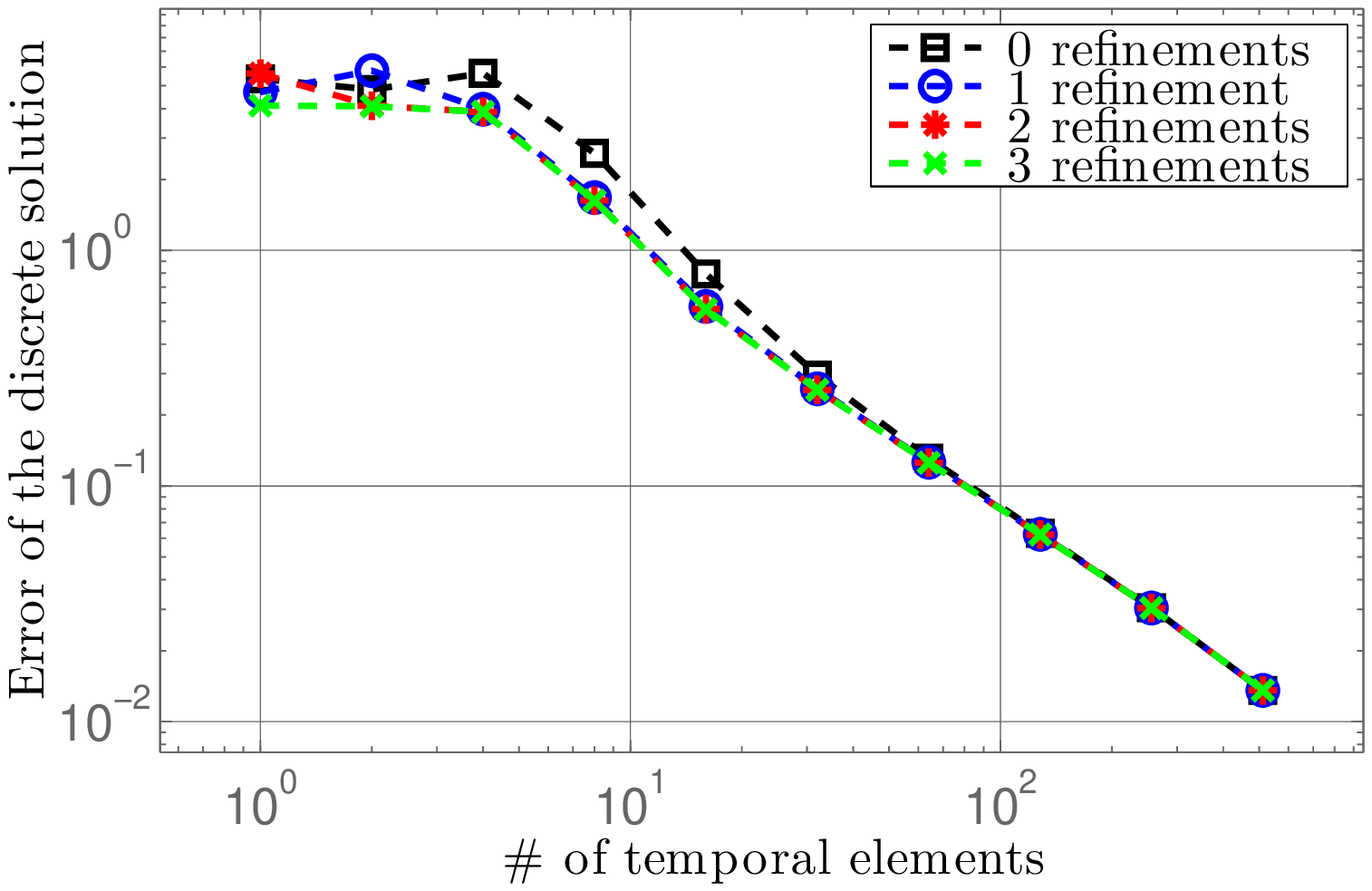}
		\includegraphics[width=.7\textwidth]{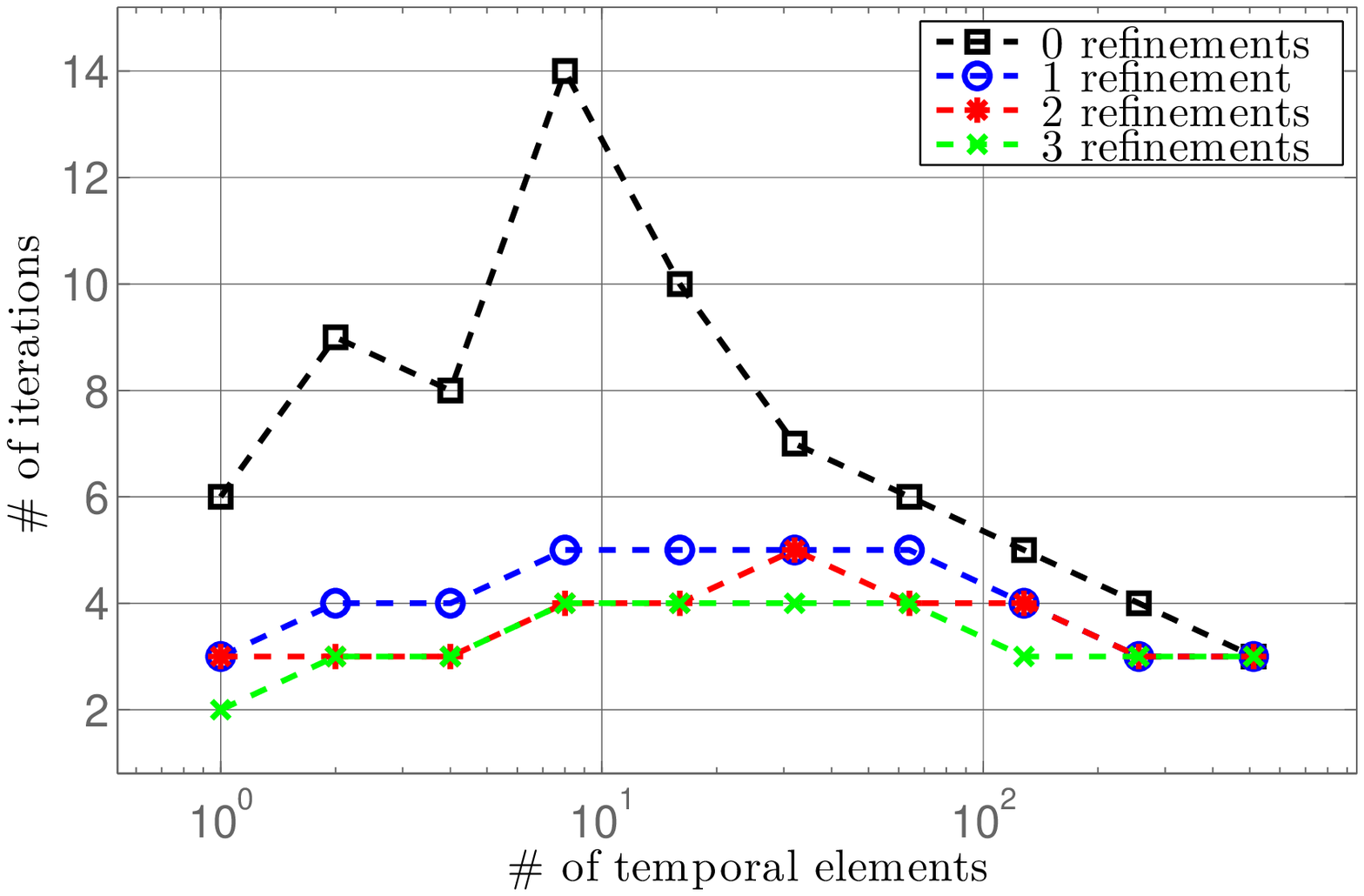}
		\includegraphics[width=.7\textwidth]{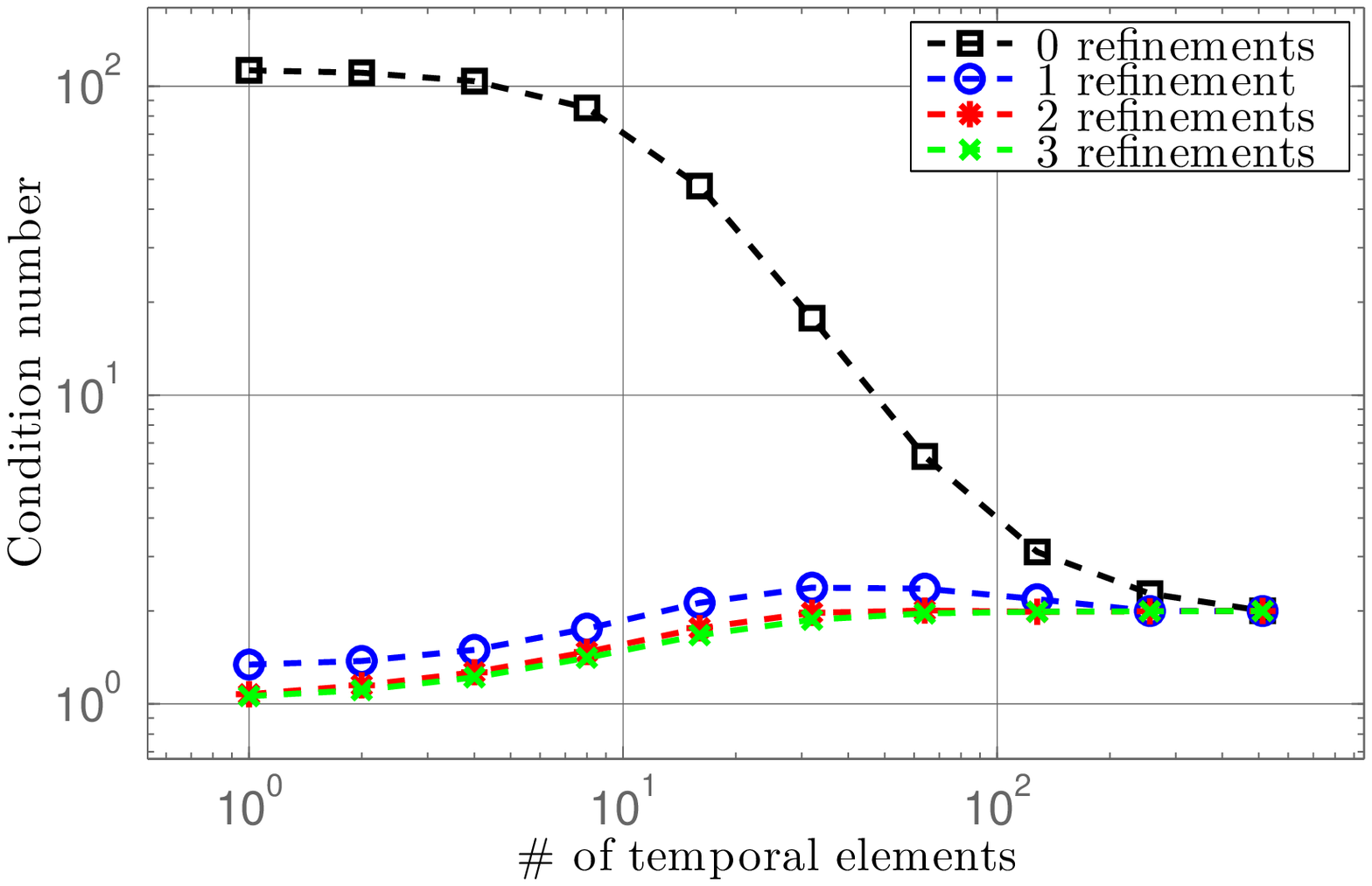}
		\caption{
			a)
			Accuracy of the discrete solution
			in the space-time norm $\tnorm{\cdot}{X}$,
			b)
			the number of 
			generalized LSQR iterations,
			c)
			and
			condition number of the preconditioned system matrix
			as a function of the number
			of temporal elements
			for the setup of \Sec \ref{s:x:B}.
			Each line corresponds to
			a number of temporal refinements
			of the test space.
		}
		\label{f:B}
	\end{center}
\end{figure}

\subsection{Execution times} \label{s:x:C}

In the second experiment
we use the spatial discretization
by 
the first order finite element space
on the L-shaped domain
$D = (-1,1)^2 \setminus [0,1)^2$
produced by the Matlab PDE toolbox
with four subsequent regular mesh refinements.
%
%
This results in $32'705$ spatial degrees of freedom.
For Type 1 and Type 2 temporal subspaces
with different temporal resolutions,
we measure 
a) 
the number of generalized least squares
iterations as in the previous subsection,
b)
the assembly time of the space-time load vector,
and 
c)
the solution time by the generalized LSQR algorithm.
We compare 
to the execution time of
Crank-Nicolson time stepping scheme
with a direct solver for each time step
on the same temporal mesh.
A pool of four Matlab workers
on
a Linux machine equipped
with four AMD Opteron 2220 processors
and 32 GB RAM
was used.
From the results documented in \Fig \ref{f:C} 
we infer that
while
the present implementation
is competitive (in terms of execution time),
it
can only unfold its full potential in 
a massively parallel setting
with 
approximate multigrid- or wavelet-based solvers 
within the space-time preconditioners.

\begin{figure}[htbp]
	\begin{center}
		\includegraphics[width=.7\textwidth]{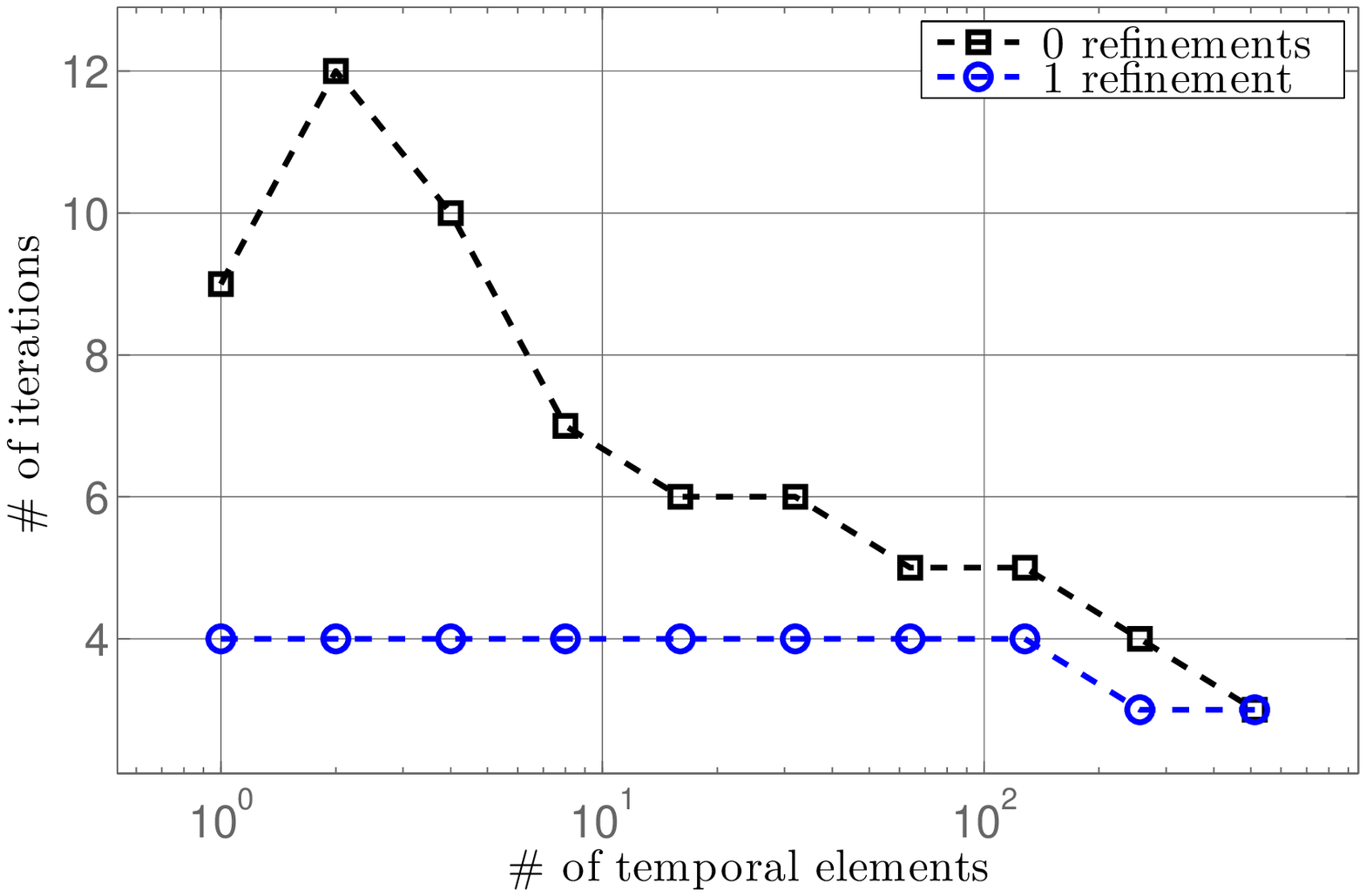}
		\includegraphics[width=.7\textwidth]{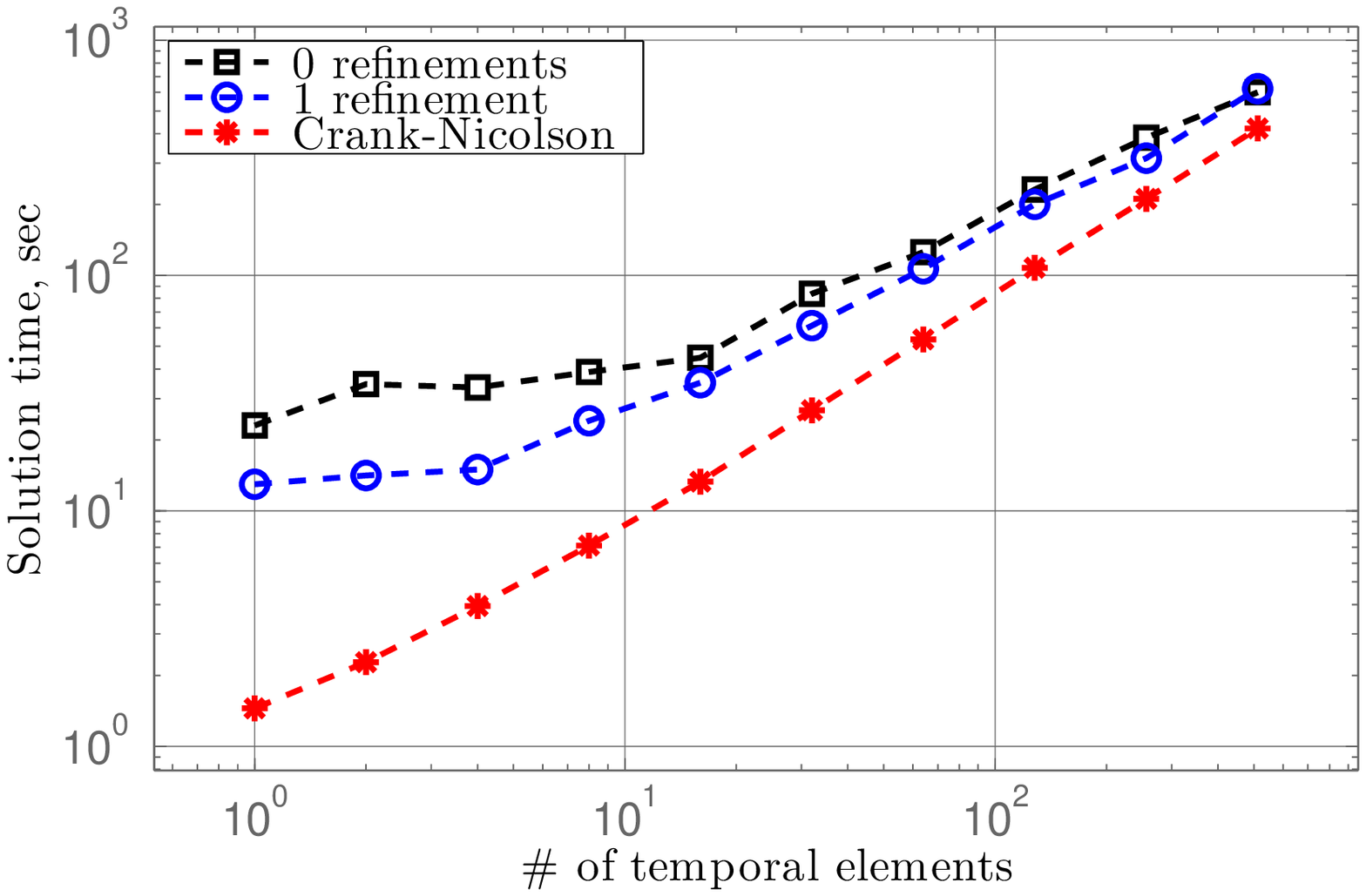}
		\includegraphics[width=.7\textwidth]{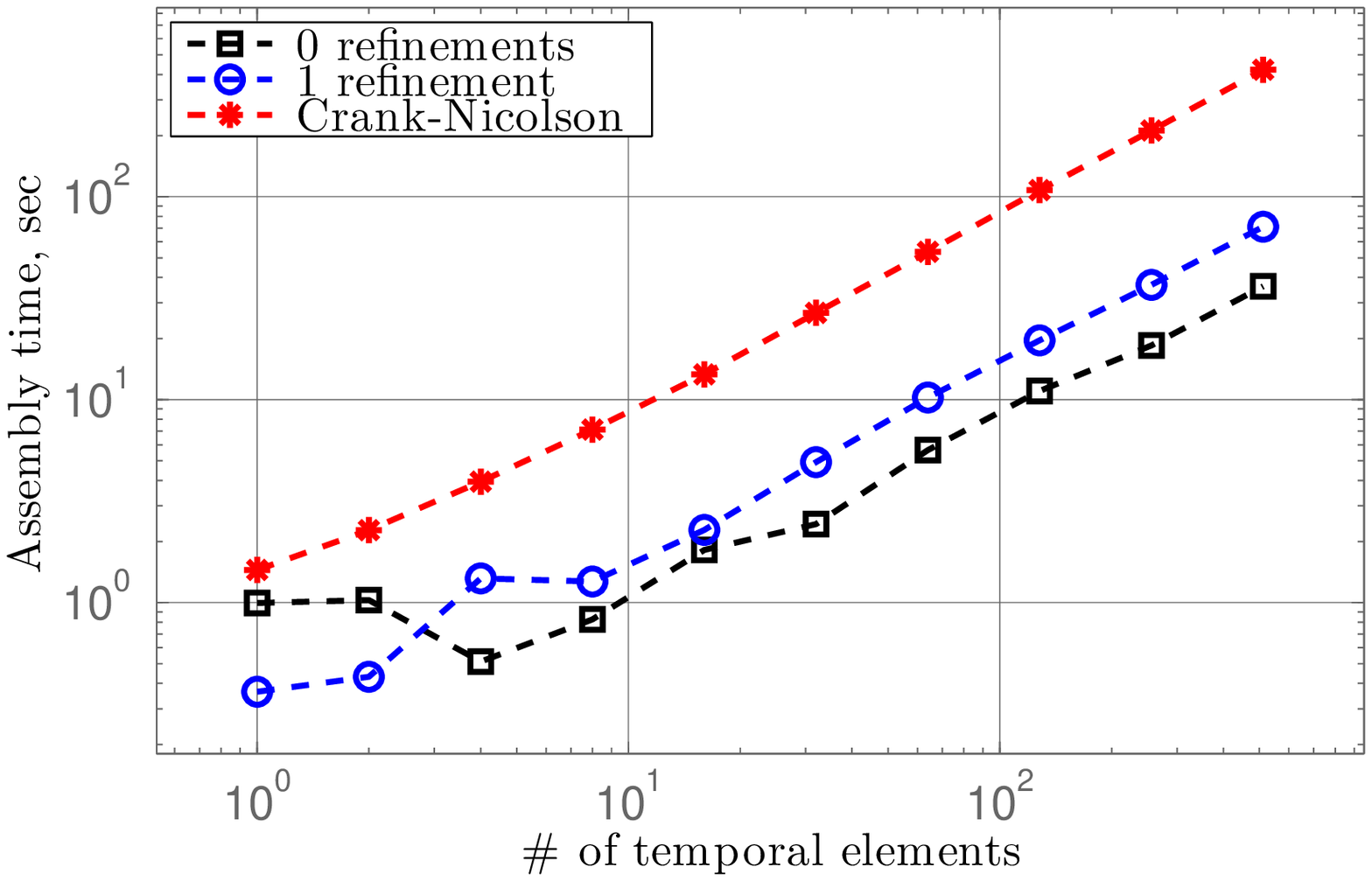}
		\caption{
			a)
			The number of 
			generalized least squares iterations,
			b)
			solution time of the generalized least squares solver,
			c)
			assembly time of the space-time load vector
			as a function of the number
			of temporal elements,
			and in comparison to the Crank-Nicolson time stepping scheme,
			as described in \Sec \ref{s:x:C}.
			Number of refinements
			refers to the number of temporal refinements
			of the test space.
		}
		\label{f:C}
	\end{center}
\end{figure}

\section{Conclusions} \label{s:end}

A concise Matlab implementation
of a 
space-time simultaneous discretization and solution algorithm
for parabolic evolution equations
that is stable, parallelizable, and modular
has been presented.
It admits 
nonuniform temporal meshes and time-dependent inputs.
Very efficient preconditioners
for the iterative resolution
of the resulting single linear system of equations
are available.
Extensions to 
higher order in time
and to 
space-time compressive algorithms
are possible
\cite{Andreev2013,AndreevDiss,AndreevTobler2012}.

Let us point out what we believe
is currently the major obstruction
to massively parallel computations
based on the algorithm presented here.
Allowing arbitrary temporal meshes,
the simultaneous diagonalization
of the temporal FEM matrices 
discussed in \Sec \ref{s:i:inv}
leads to a nonlocal-in-time transformation $\Vbf_t$.
If the solution vector
is split
over multiple computational nodes
along the temporal dimension,
which is natural,
the application of this transformation
may result in heavy communication.
If, however, the temporal mesh is uniform,
or derives from a successive dyadic partition of a coarse mesh,
it may be possible 
to use more efficient
fast Fourier or wavelet based transforms 
for this particular step instead.

\begin{acknowledgements}
	Research in part supported by 
	the Swiss NSF Grant No.~127034
	and by
	the ERC AdG No.~247277
	held by Ch.~Schwab,
	Seminar for Applied Mathematics,
	ETH Z\"urich.
	%
	%
	The authors thanks
	U.S.~Fjordholm
	and
	J.~Schweitzer
	for comments and discussions
	on the draft of this manuscript.
\end{acknowledgements}


\bibliographystyle{spmpsci}
\bibliography{refs}

\end{document}